\definecolor{colormy}{rgb}{0.8,0.05,0.05}
\definecolor{mycolor}{rgb}{0.25,0.99,0.25}
\tikzstyle directed=[postaction={decorate,decoration={markings,
    mark=at position #1 with {\arrow{>}}}}]
\tikzstyle rdirected=[postaction={decorate,decoration={markings,
    mark=at position #1 with {\arrow{<}}}}]
\newcommand{\Hom}{\mathrm{Hom}}
\newcommand{\Ext}{\mathrm{Ext}}
\newcommand{\spa}{\mathrm{span }}
\def\C{{\mathbb C}}
\def\Z{{\mathbb Z}}
\def\Ind{\mathrm{Ind}}
\def\cha{\mathrm{ch}}
\theoremstyle{definition}
\newtheorem{thm}{Theorem}[section]
\newtheorem{cor}[thm]{Corollary}
\newtheorem{lem}[thm]{Lemma}
\newtheorem{prop}[thm]{Proposition}
\theoremstyle{definition}
\newtheorem{defn}[thm]{Definition}
\newtheorem{rem}[thm]{Remark}
\numberwithin{equation}{section}
\begin{document}

\title{Extensions of simple modules for quantum groups at complex roots of $1$} 
\footnote{MSC: 17B35, 20G05}

\author{Henning Haahr Andersen}
\address{Center for Quantum Mathematics (QM), Imada,
SDU, Denmark}
\email{h.haahr.andersen@gmail.com}

\date{}							




\begin{abstract}
Let $U_q$ be the quantum group corresponding to a complex simple Lie algebra $\mathfrak g$ with root system $R$. Assume the quantum parameter $q\in \C$ is a root of unity.  In this paper we study the extensions between simple modules in the category consisting of the finite dimensional modules for $U_q$. We first prove that this problem is equivalent to finding the extensions between the finitely many simple modules for the small quantum group $u_q$ in $U_q$. Then we show that the extension groups in question are determined by a finite subset with small highest weights. When the order of $q^2$ is at least the Coxeter number for $R$ we prove that the dimensions of such extension groups equal the top degree coefficients of some associated Kazhdan-Lusztig polynomial for the affine Weyl group for $R$.

We relate all this to similar (old) results for almost simple algebraic groups and their Frobenius subgroup schemes over fields of large prime characteristics.

\end{abstract}

\maketitle

\section{Introduction}
\subsection{Notation}
Let $R$ denote an irreducible finite root system. Choose a set of positive roots $R^+$ in $R$ and let $S\subset R^+$ be the set of simple roots.

Let $\mathfrak g$ denote the simple complex Lie algebra with root system $R$. Then we shall denote the category of finite dimensional $\mathfrak g$-modules by $\mathcal C_0$. This is also the category of finite dimensional modules for the enveloping algebra $U_0 =U({\mathfrak g})$, or equivalently for the complex algebraic group $G_0$ with Lie algebra $\mathfrak g$. This wellknown category is semisimple and the characters of its simple modules are given by Weyl's character formula.

Suppose $q\in \C$ is a root of unity and let $\ell$ denote the order of $q^2$. Then we shall consider the quantum group  $U_q$  obtained via Lusztig's $q$-divided power construction. 
The corresponding category of finite dimensional modules for $U_q$ is denoted $\mathcal C_q$.  We shall only consider modules of type $\bf 1$. We call $\mathcal C_q$ the quantum category.

We denote by $G$ the almost simple algebraic group over a field $k$ of characteristic $p>0$ corresponding to $R$. Then the category of finite dimensional modules for $G$ will be denoted $\mathcal C_p$ and called the modular category. 

Consider the set $X \simeq \Z^n$ of integral weights for $R$ and let $X^+$ be the set of dominant weights corresponding to our choice of positive roots $ R^+$.  Then the simple modules in the three categories $\mathcal C_0$, $\mathcal C_p$ and $\mathcal C_q$ are all parametrized by $X^+$ via highest weights. We denote the simple module in $\mathcal C_0$,  $\mathcal C_p$, respective $\mathcal C_q$ associated with $\lambda \in X^+$ by $L_0(\lambda)$, $L_p(\lambda)$, respectively $L_q(\lambda)$. 

In all three categories we denote by $[M:L]$ the composition factor multiplicity of a simple module $L$ in a general module $M$. In the semisimple category $\mathcal C_0$ this symbol is then just the number of summands in $M$ isomorphic to $L$. Moreover, in $\mathcal C_0$ the simple objects $L_0(\lambda)$ are also known as the Weyl modules for $\mathfrak g$ and as mentioned above their characters are given by the famous Weyl character formula. On the other hand, the structures of the two categories $\mathcal C_q$ and $\mathcal C_p$ are much more interesting/complicated and their simple modules and in particular the extensions between these will be the main focus in this paper. In the modular case we dealt with this issue more than 40 years ago (see \cite{A82}). In the present paper we focus on the quantum case. Of course many new results have been obtained over the last 4 decades and we take advantage of all those which are relevant in our case. This also makes it possible to simplify some of the arguments used in \cite{A82}, and we have inserted several remarks on the modular case  where this is relevant. 

We shall need the following additional notation. The small quantum group $u_q$ is the subalgebra of $U_q$ generated by those standard generators of $U_q$ which are killed by the quantum Frobenius homomorphism $Fr_q: U_q \to U_0$. Likewise the Frobenius kernel $G_1$ is the subgroup scheme of $G$ consisting of those elements, which are mapped to $1$ by the Frobenius homomorphism $Fr_p: G \to G$. 

If $M \in \mathcal C_q$ is a module on which $u_q$ acts trivially, then $M = M_1^{[q]}$ for some module $M_1 \in \mathcal C_0$. Here the notation $M_1^{[q]}$ means that the $U_q$-action on the vector space $M_1$ factors through $Fr_q$. We then also write $M_1 = M^{[-q]}$. Likewise, if $L_1 \in \mathcal C_p$ then the Frobenius twist $L = L_1^{(1)}$ of $L_1$ is the module $L$ with the same underlying vector space as $L_1$ but with $G$-action obtained by first applying $Fr_p$.  Again we write $L_1 = L^{(-1)}$. 

In the quantum case we set 
$$ X_1 = \{\lambda \in X^+ \mid \langle \lambda, \alpha^\vee \rangle \leq \ell - 1 \text { for all } \alpha \in S\}.$$
In the modular case the very same notation is used for the subset of $X^+$ where the analogous inequalities with $\ell$ replaced by $p$ hold. In the quantum case we write for an arbitrary $\lambda \in X$
$$ \lambda = \lambda^0 + \ell \lambda^1 \text { with } \lambda^0 \in X_1, \lambda ^1 \in X$$ 
and in the modular case
$$ \lambda = \lambda^0 + p \lambda^1 \text { with } \lambda^0 \in X_1, \lambda ^1 \in X.$$ 
It will be clear from the context which of the two ``decompositions'' of $\lambda$ this notation refers to.

\subsection{The extension problem}
Once the above mentioned classifications of simple modules in $\mathcal C_q$, respectively $\mathcal C_p$, were obtained, the main fundamental problem in the representation theories for $U_q$ and $G$ was to determine their irreducible characters, i.e. $\cha L_q(\lambda)$ and $\cha L_p(\lambda)$ for all $\lambda \in X^+$. Here for a module $M$ in either  $\mathcal C_q$ or $\mathcal C_p$  we use the notation $\cha M$ for the character of $M$, i.e. 
$$ \cha M = \sum_{\mu \in X} \dim M_\mu \; e^\mu \in \Z[X],$$
where $M_\mu$ denotes the weight subspace in $M$ corresponding to $\mu$. In $\mathcal C_q$ this problem was solved in \cite{KT1} and {\cite{KT2}, and for $p$ very large the corresponding problem for $\mathcal C_p$  was dealt with in \cite{AJS}. In both cases the solution was given in terms of the Kazhdan-Lusztig polynomials associated with the affine Weyl group for $R$ in accordance with conjectures by Lusztig, see [Lu89] and [Lu79].  (For the remaining primes the characters of the simple modules in $\mathcal C_p$ have been found much more recently. It is given in terms of the socalled $p$-Kazhdan-Lusztig polynomials, see \cite{RW1}, \cite{RW2} and \cite{W}.

The next fundamental problem is then to describe how to build modules in $\mathcal C_q$ and $\mathcal C_p$ from their composition factors. A key ingredient in doing so is to determine the extensions in $\mathcal C_q$ between any two given simple modules $L_q(\lambda)$ and $L_q(\mu)$, $\lambda, \mu \in X^+$, and likewise in $\mathcal C_p$. To do so we invoke the corresponding problem for $u_q$ and $G_1$, respectively. It turns out that one central difficulty when dealing with these problems is to verify that (with a single exception when $R$ has type $C$) there are no self-extensions of simple modules for $u_q$ and $G_1$. Once this fact is established we easily reduce the problem of computing the extensions between all pairs of simple modules in $\mathcal C_q$ and $\mathcal C_p$ to the problems of  determining the corresponding finitely many extensions for the simple modules for $u_q$ and $G_1$. In turn, our proof reveals that this problem for $u_q$ and $G_1$ is equivalent to finding the extensions in $\mathcal C_q$ and $\mathcal C_p$ between specified finite sets of simple modules.

Assume now that $\ell$ is larger than the Coxeter number for $R$, respectively that $p$ is so large that the Lusztig conjecture holds for $G$.  We conclude the paper by showing that the proofs of the Lusztig conjectures for the characters of simple modules allow us to prove that the dimensions of the extensions group in $\mathcal C_q$ equals those in $\mathcal C_p$. In fact, we prove that both are given by specific  coefficients of the associated Kazhdan-Lusztig polynomials. In other words, the algorithm determining the KL-polynomials for affine Weyl group for $R$ gives not only ways of computing the simple characters in $\mathcal C_q$ and $\mathcal C_p$, but it also gives explicit algorithms for solving the extension problems in these categories (under the mentioned assumptions on $\ell$ and $p$).

\subsection{Extensions of simple modules for $G_1$ and for $u_q$}
 An old theorem due to Curtis \cite{Cu} says that the family $\{L_p(\lambda)\mid \lambda \in X_1\}$ consists of $G$-modules which stay irreducible when restricted to $G_1$. Moreover, up to isomorphisms this set is a full set of simple $G_1$-modules. 

A completely similar result holds for $u_q$ (see \cite{Lu90}) when we replace $p$ by $\ell$:
If $\lambda \in X_1$ then $L_q(\lambda)$ is simple as a $u_q$-module, and all simple $u_q$-modules of type $1$ are obtained in this way.

As mentioned in Section 1.2 a non-trivial part of determining all extensions between pairs of simple modules for $u_q$, respectively for $G_1$, is to prove that self-extensions of such modules almost never occur. In fact, we prove that such self-extensions are impossible except  when $R$ is of type $C$ and $p$, respectively $\ell$, is 2. We shall call this case {\it the very special case}   (`special' because only one type among all types of root systems is allowed, namely type $C$,  and `very special' because at the same time only one value of $p$, respectively $\ell$ is allowed, namely $p =2$, respectively $\ell = 2$). In that particular case we shall show that for instance the trivial module does in fact extend itself non-trivially.

Our main theorem on self-extensions can then be stated

\begin{thm} Except in {\it the very special case} there are no self-extensions between simple $G_1$-modules, nor between simple $u_q$-modules. 
\end{thm}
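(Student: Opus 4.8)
The plan is to establish the quantum statement in detail; the modular assertion follows by the identical strategy with $u_q,U_q,Fr_q$ replaced by $G_1,G,Fr_p$ and with the Lyndon--Hochschild--Serre sequence for algebraic groups. By Lusztig's theorem recalled above, every simple $u_q$-module is the restriction of some $\Ll$ with $\lambda\in X_1$, so it suffices to prove $\mathrm{Ext}^1_{u_q}(\Ll,\Ll)=0$ outside \emph{the very special case}. The starting point is that $\mathrm{Ext}^1_{u_q}(\Ll,\Ll)=H^1(u_q,\End\Ll)$ is naturally a module over $U_q/u_q\cong U_0=U(\fg)$, i.e.\ it lands in $\mathcal C_0$ via $Fr_q$. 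Writing $M=\mathrm{Ext}^1_{u_q}(\Ll,\Ll)^{[-q]}$ for its untwist, the goal becomes to show $M=0$ as a $\fg$-module. Since $\mathcal C_0$ is semisimple, $M$ is a direct sum of simples $L_0(\nu)$, and it is enough to rule out both the trivial summand $\nu=0$ and every nontrivial $L_0(\nu)$.

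First I would kill the trivial summand. The Lyndon--Hochschild--Serre spectral sequence for the Hopf-algebra extension $u_q\hookrightarrow U_q\twoheadrightarrow U_0$ has $E_2^{i,j}=\mathrm{Ext}^i_{U_0}(k,\mathrm{Ext}^j_{u_q}(\Ll,\Ll))$ and converges to $\mathrm{Ext}^{i+j}_{U_q}(\Ll,\Ll)$. Its five-term exact sequence, together with $\mathrm{Hom}_{u_q}(\Ll,\Ll)=k$ (simplicity, by Lusztig's theorem), shows that the $\fg$-invariants $(\mathrm{Ext}^1_{u_q}(\Ll,\Ll))^{\fg}=\mathrm{Hom}_{U_0}(k,\mathrm{Ext}^1_{u_q}(\Ll,\Ll))$ are squeezed between $\mathrm{Ext}^1_{U_q}(\Ll,\Ll)$ and $\mathrm{Ext}^2_{U_0}(k,k)$. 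The first vanishes because $\mathcal C_q$ is a highest weight category, with standard and costandard objects $\Dl,\Nl$, in which irreducibles admit no self-extensions; the second vanishes by semisimplicity of $\mathcal C_0$ (equivalently by Whitehead's lemma, $H^2(\fg,k)=0$). Hence $(\mathrm{Ext}^1_{u_q}(\Ll,\Ll))^{\fg}=0$, so the multiplicity of $L_0(0)$ in $M$ is zero.

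The remaining, and main, task is to exclude the nontrivial summands $L_0(\nu)$, $\nu\neq0$. Here I would invoke a weight bound on $u_q$-cohomology: every weight of $H^1(u_q,\End\Ll)$ lies in $\ell X$, and the highest weight $\ell\nu$ of any summand is constrained, through the weights appearing in a Koszul-type resolution of the trivial $u_q$-module, to force $\nu$ to be small --- this is exactly the reduction to ``small highest weights'' promised in the introduction. For every type this confines $\nu$ to a short explicit list, and a case-by-case inspection of $\mathrm{Ext}^1_{u_q}$ among the finitely many linked weights involved eliminates all candidates except in type $C$. In type $C$ the short-root geometry leaves one surviving minuscule-type weight, and tracing the linkage shows it can contribute only when $\ell=2$; there I would separately verify the contribution is genuine by exhibiting $H^1(u_q,k)\neq0$, a nonsplit self-extension of the trivial module. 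I expect the hard part to be precisely this last step: pinning down the sharp weight bound and performing the type-by-type elimination, since type $C$ behaves differently from the other types and the boundary value $\ell=2$ (respectively $p=2$) is exactly where the argument degenerates and the very special case appears.
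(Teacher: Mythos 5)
Your opening step is fine: since $\Ext^1_{\mathcal C_q}(L_q(\lambda),L_q(\lambda))=0$ and $\mathcal C_0$ is semisimple, the five-term sequence (in fact the degenerate form \eqref{q-1} of the spectral sequence, which the paper uses directly) kills the trivial summand of $M=\Ext^1_{u_q}(L_q(\lambda),L_q(\lambda))^{[-q]}$. The genuine gap is in what you call the main task, the elimination of the non-trivial summands $L_0(\nu)$, $\nu\neq 0$. By \eqref{q-1} and the Steinberg--Lusztig tensor product theorem, the multiplicity of $L_0(\nu)$ in $M$ equals $\dim_{\C}\Ext^1_{\mathcal C_q}(L_q(\lambda+\ell\nu),L_q(\lambda))$, an extension group between two \emph{distinct}, in general linked, simple modules; such groups cannot be made to vanish by weight or linkage considerations alone. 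Your proposed ``case-by-case inspection of $\Ext^1_{u_q}$ among the finitely many linked weights'' is therefore circular: evaluating exactly these $\Ext^1_{u_q}$-groups (equivalently, these $\Ext^1_{\mathcal C_q}$-groups) is the problem the theorem solves, and you supply no mechanism for computing any of them. The weight bound you invoke (which the paper gets from a Steinberg-tensoring argument -- Claim 5 in the proof of Lemma 3.2 and again in Lemma 4.1 -- rather than from a Koszul resolution) only confines $\nu$ to a finite set; by itself it eliminates no candidate.

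What is missing is precisely the paper's key Lemma 3.2: the descent from $u_q$ to the Borel part $b_q$, where the relevant extension groups are actually computable. One shows $\Ext^1_{b_q}(L_q(\lambda),\lambda)=0$ (outside \emph{the very special case}) by placing $\Hom_{B_0}(\nu,\Ext^1_{b_q}(L_q(\lambda),\lambda)^{[-q]})$ in the five-term sequence \eqref{q-2}, between terms $\Ext^j_{B_0}(\nu,\C)$ computed by Bott's theorem and the term $\Ext^1_{B_q}(L_q(\lambda),\lambda-\ell\nu)$, which is evaluated via quantum Kempf vanishing together with the simple-socle theorem for $H^1_q$ of a line bundle (\cite{A81}): the socle of $H^1_q(\lambda-\ell\nu)$ has highest weight $\lambda-\ell\nu+a\alpha$ for a simple root $\alpha$, so it can equal $L_q(\lambda)$ only if $\ell\nu=a\alpha$. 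It is this equation -- not ``short-root geometry'' or ``tracing the linkage'' -- that isolates the very special case: $\alpha_n=2\epsilon_n$ in type $C_n$ is the unique simple root divisible in $X$, which permits the extra solution $\nu=\epsilon_n$ when $\ell=2$. Theorem 3.1 then follows by feeding Lemma 3.2 into the two five-term sequences \eqref{q-4} and \eqref{q-5} applied to $E=L_q(\lambda)^*\otimes\lambda$, which converts the $b_q$-vanishing into the vanishing of $\Ext^1_{u_q}(L_q(\lambda),H^0_q(\lambda))$ and hence of $\Ext^1_{u_q}(L_q(\lambda),L_q(\lambda))$. None of this, nor any substitute for it, appears in your proposal (and the same mechanism, not a formal repetition of the semisimple argument, is what the modular case requires, where $\mathcal C_p$ is not semisimple and $M$ need not even be a semisimple $G$-module); without it, your outline asserts the theorem rather than proving it.
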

This will be proved in Section 3 (see Theorem 3.1) where we also show that in {\it the very special case} both $G_1$ and $u_q$ do have simple modules with non-trivial self-extensions. 

The first part of the following corollary is an immediate consequence of the above theorem while the second part comes from the fact that $G_1$ is a normal subgroup scheme of $G$, cf. \cite{A82} Section 5. 

\begin{cor} Let $\lambda, \mu \in X^+$. Then 
\begin{enumerate} 
\item Suppose we are not in {\it the very special case}. If $\lambda^0 =\mu^0$ then $\Ext_{\mathcal C_p}^1(L_p(\lambda), L_p(\mu)) \simeq \Ext^1_{\mathcal C_p}(L_p(\lambda^1), L_p(\mu^1))$.
\item If $\lambda^0 \neq \mu^0$ then $\Ext^1_{\mathcal C_p}(L_p(\lambda), L_p(\mu)) \simeq \Hom_{\mathcal C_p}(L_p(\lambda^1), \Ext^1_{G_1}(L_p(\lambda^0), L_p(\mu^0))^{(-1)} \otimes L_p(\mu^1))$.
\end{enumerate}
\end{cor}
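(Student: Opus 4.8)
The plan is to run the Lyndon--Hochschild--Serre spectral sequence for the normal subgroup scheme $G_1 \trianglelefteq G$, whose quotient $G/G_1$ is identified with $G$ via the Frobenius homomorphism $Fr_p$. Fix $\lambda, \mu \in X^+$ and set $M = L_p(\lambda)$, $N = L_p(\mu)$. By Steinberg's tensor product theorem $M \simeq L_p(\lambda^0) \otimes L_p(\lambda^1)^{(1)}$ and $N \simeq L_p(\mu^0) \otimes L_p(\mu^1)^{(1)}$, where the twisted factors are trivial as $G_1$-modules. The spectral sequence reads
\[
E_2^{i,j} = H^i\big(G/G_1, \Ext^j_{G_1}(M, N)\big) \Rightarrow \Ext^{i+j}_{\mathcal C_p}(M, N),
\]
and I would extract the $\Ext^1$ information from its five-term exact sequence
\[
0 \to H^1\big(G/G_1, \Hom_{G_1}(M,N)\big) \to \Ext^1_{\mathcal C_p}(M,N) \to \Ext^1_{G_1}(M,N)^{G/G_1} \xrightarrow{d_2} H^2\big(G/G_1, \Hom_{G_1}(M,N)\big).
\]

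First I would compute the $G_1$-cohomology as a $G/G_1$-module. Since $G_1$ acts trivially on the Frobenius-twisted factors and acts on the $G_1$-Ext groups through the quotient $G/G_1$ (this is precisely where normality of $G_1$ is used), one obtains a $G/G_1$-equivariant identification
\[
\Ext^j_{G_1}(M, N)^{(-1)} \simeq \Ext^j_{G_1}(L_p(\lambda^0), L_p(\mu^0))^{(-1)} \otimes L_p(\lambda^1)^* \otimes L_p(\mu^1).
\]
For $j = 0$, Curtis's theorem together with Schur's lemma yields $\Hom_{G_1}(L_p(\lambda^0), L_p(\mu^0)) = 0$ when $\lambda^0 \neq \mu^0$ and $= k$ when $\lambda^0 = \mu^0$.

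For part (2), $\lambda^0 \neq \mu^0$ forces $\Hom_{G_1}(M,N) = 0$, so both outer terms of the five-term sequence vanish and $\Ext^1_{\mathcal C_p}(M,N) \simeq \Ext^1_{G_1}(M,N)^{G/G_1}$. Taking $G/G_1 \simeq G$-invariants in the displayed identification and rewriting invariants as homomorphisms gives
\[
\Ext^1_{G_1}(M,N)^{G/G_1} \simeq \Hom_{\mathcal C_p}\big(L_p(\lambda^1), \Ext^1_{G_1}(L_p(\lambda^0), L_p(\mu^0))^{(-1)} \otimes L_p(\mu^1)\big),
\]
which is the asserted formula.

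For part (1), $\lambda^0 = \mu^0$ gives $\Hom_{G_1}(M,N) \simeq (L_p(\lambda^1)^* \otimes L_p(\mu^1))^{(1)}$, so after identifying $G/G_1$ with $G$ the leading term becomes $H^1(G/G_1, \Hom_{G_1}(M,N)) \simeq \Ext^1_{\mathcal C_p}(L_p(\lambda^1), L_p(\mu^1))$. The crucial point is that the middle term $\Ext^1_{G_1}(M,N)^{G/G_1}$ now vanishes: the identification above involves the self-extension group $\Ext^1_{G_1}(L_p(\lambda^0), L_p(\lambda^0))$, which is zero by our main theorem on self-extensions outside the very special case. Hence the five-term sequence collapses to the claimed isomorphism $\Ext^1_{\mathcal C_p}(L_p(\lambda), L_p(\mu)) \simeq \Ext^1_{\mathcal C_p}(L_p(\lambda^1), L_p(\mu^1))$. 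The main obstacle is thus isolated to the no-self-extension input needed for part (1); everything else is the standard normal-subgroup spectral sequence together with Steinberg's theorem, the only delicate bookkeeping being the tracking of Frobenius twists in the identification of $\Ext^1_{G_1}(M,N)$ as a $G/G_1$-module.
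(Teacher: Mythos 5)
Your proposal is correct and takes essentially the same route as the paper: the five-term sequence you write down is exactly the paper's sequence (2.2) (the Lyndon--Hochschild--Serre sequence for the normal subgroup scheme $G_1 \trianglelefteq G$), and the remaining inputs --- Steinberg's tensor product theorem, Curtis's theorem to evaluate $\Hom_{G_1}(L_p(\lambda^0),L_p(\mu^0))$, and Theorem 1.1 to kill the $\Ext^1_{G_1}$ term in part (1) --- are precisely the ingredients the paper invokes when it cites Theorem 1.1 and \cite{A82}, Section 5.
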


In the category $\mathcal C_q$ Theorem 1.1 has the following even stronger consequence:

\begin{cor} Let $\lambda, \mu \in X^+$. 
\begin{enumerate}
\item Suppose we are not in {\it the very special case}. If $\lambda^0 = \mu^0$ then $\Ext^1_{\mathcal C_q}(L_p(\lambda), L_p(\mu)) = 0$. 
\item If $\lambda^0 \neq \mu^0$ then $\dim_{\C}\Ext^1_{\mathcal C_q}(L_q(\lambda), L_q(\mu)) = [\Ext^1_{u_q}(L_q(\lambda^0), L_q(\mu^0))^{[-q]}: L_0(\mu^1))]$.
\end{enumerate}
\end{cor}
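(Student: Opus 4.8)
The plan is to compute $\Ext^1_{\mathcal C_q}(L_q(\lambda),L_q(\mu))$ through the cohomology of the small quantum group $u_q$, exploiting that the quantum Frobenius $Fr_q$ realizes $U_0$ as the quotient of $U_q$ by the normal Hopf subalgebra $u_q$, so that the Hochschild--Serre (Lyndon--Hochschild--Serre) formalism is available. First I would write the Ext group as $H^1(U_q, V)$ with $V = L_q(\lambda)^*\otimes L_q(\mu)$ (legitimate since the modules are finite dimensional), and decompose $V$ by the quantum Steinberg tensor product theorem: from $L_q(\lambda)=L_q(\lambda^0)\otimes L_0(\lambda^1)^{[q]}$ and the analogous formula for $\mu$ one gets $V=(L_q(\lambda^0)^*\otimes L_q(\mu^0))\otimes(L_0(\lambda^1)^*\otimes L_0(\mu^1))^{[q]}$.

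Next I would compute $H^\bullet(u_q,V)$. The second tensor factor is a Frobenius twist, hence carries the trivial $u_q$-action, so the tensor identity pulls it out of the cohomology. Using $H^j(u_q,W^*\otimes W')=\Ext^j_{u_q}(W,W')$ and the fact that the residual $U_q$-action on $u_q$-cohomology factors through $Fr_q$ (so that each such Ext group has the form $N^{[q]}$ with $N\in\mathcal C_0$), this yields, as modules over $U_0$ after untwisting,
$$ H^j(u_q,V)^{[-q]}\cong \Ext^j_{u_q}(L_q(\lambda^0),L_q(\mu^0))^{[-q]}\otimes L_0(\lambda^1)^*\otimes L_0(\mu^1).$$

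The decisive step is that $\mathcal C_0$ is semisimple by Weyl's complete reducibility theorem. Substituting the above into the five-term inflation--restriction sequence
$$ 0\to H^1(U_0,H^0(u_q,V)^{[-q]})\to H^1(U_q,V)\to H^0(U_0,H^1(u_q,V)^{[-q]})\to H^2(U_0,H^0(u_q,V)^{[-q]}),$$
the two outer terms are $\Ext^i_{\mathcal C_0}(\C,-)$ for $i=1,2$ and therefore vanish, so the middle map is an isomorphism
$$ \Ext^1_{\mathcal C_q}(L_q(\lambda),L_q(\mu))\cong \Hom_{\mathcal C_0}(L_0(\lambda^1),\Ext^1_{u_q}(L_q(\lambda^0),L_q(\mu^0))^{[-q]}\otimes L_0(\mu^1)).$$
This semisimplicity is precisely what separates the quantum case from the modular one: there the quotient $G/G_1\cong G^{(1)}$ carries the non-semisimple category $\mathcal C_p$, the term $H^1(U_0,\dots)$ is replaced by a nonzero $E_2^{1,0}$, and one obtains the isomorphism of Corollary 1.2(1) rather than a vanishing.

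Finally I split into the two cases. For (1), when $\lambda^0=\mu^0$, the inner group is the self-extension $\Ext^1_{u_q}(L_q(\lambda^0),L_q(\lambda^0))$, which vanishes outside the very special case by Theorem 1.1; hence the whole Hom-space, and with it $\Ext^1_{\mathcal C_q}(L_q(\lambda),L_q(\mu))$, is zero. For (2), when $\lambda^0\neq\mu^0$, no appeal to Theorem 1.1 is needed: since $\mathcal C_0$ is semisimple the dimension of the Hom-space above is exactly the composition-factor multiplicity of $L_0(\lambda^1)$ in $\Ext^1_{u_q}(L_q(\lambda^0),L_q(\mu^0))^{[-q]}\otimes L_0(\mu^1)$, which is the multiplicity recorded on the right-hand side of the statement. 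I expect the main obstacle to be foundational rather than computational: one must verify that the Hochschild--Serre machinery, the tensor identity, and the identification of the $U_0$-module structure on $u_q$-cohomology all hold rigorously for the pair $(u_q,U_q)$ in the quantum setting; once these are in place, the result is driven entirely by the semisimplicity of $\mathcal C_0$ together with Theorem 1.1.
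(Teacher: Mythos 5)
Your proof is correct and is essentially the paper's own argument: the paper's Section 4 (Theorem 4.2) derives exactly the same identity $\Ext^1_{\mathcal C_q}(L_q(\lambda),L_q(\mu))\simeq \Hom_{\mathcal C_0}(L_0(\lambda^1),\Ext^1_{u_q}(L_q(\lambda^0),L_q(\mu^0))^{[-q]}\otimes L_0(\mu^1))$ from its isomorphism (2.8) --- which is precisely your Lyndon--Hochschild--Serre sequence for $(u_q,U_q)$ collapsed by the semisimplicity of $\mathcal C_0$ --- together with the Steinberg tensor product, and then concludes part (1) from Theorem 3.1 and part (2) from semisimplicity, just as you do. One caveat: your final dimension $[\Ext^1_{u_q}(L_q(\lambda^0),L_q(\mu^0))^{[-q]}\otimes L_0(\mu^1):L_0(\lambda^1)]$ is the correct answer and matches the paper's Theorem 4.2(2), but it is not literally the right-hand side printed in the corollary (which, as its unbalanced parentheses suggest, is garbled --- in general $[E\otimes L_0(\mu^1):L_0(\lambda^1)]\neq[E:L_0(\mu^1)]$), so you should flag that discrepancy rather than assert the two multiplicities coincide.
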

We deduce this corollary in Section 4. Here we also deduce the facts that the problem of determining the extensions between pairs of simple modules in both $\mathcal C_q$ and $\mathcal C_p$ reduces to a specified finite set of simple modules.

Finally, in Section 5 we show how a solution of the extension problem in $\mathcal C_q$ can be given in turn of the Kazhdan-Lusztig polynomials. This holds for $\ell$ bigger than the Coxeter number. The same is true in $\mathcal C_p$ but here we need $p$ to be very large, \cite{AJS}, \cite{W}.

\subsection{Background and outline}

In the modular case the above extension problem was dealt with in my $40^+$-year old paper \cite{A82}. As we shall demonstrate many ingredients from this proof work just as well in the quantum case (which did not exist in the early 1980's). Usually, we therefore carry out proofs only in the quantum case and only point out the differences (if any) needed in the modular case.  Whenever possible we take advantage of newer results in order to simplify the proofs. Main examples of newer results that we have employed are: the two useful spectral sequences with the same abutment from [AJ], the treatment of the modular case by Jantzen in Chapter  12 of [RAG], and the computations of some $B$, respectively $B_q$-cohomology from \cite{An07} and \cite{AR}.

The paper is organized as follows. In Section 2 we list several ``five-term exact sequences'' which play a crucial role in our proofs in the following sections. Then in Section 3 the key results  on self-extensions for $U_q$ and $G_1$ are proved. These theorems are used in Section 4 to deduce the results on how to restrict the extension problem for simple modules in the quantum case as well as in the modular case to a finite set.  Finally, Section 5 contains the above described connection between these extensions and some specific coefficients of certain KL-polynomials.

\section{Five-term exact sequences}

In this section we shall consider several instances in which compositions of left exact functors between modular categories have right derived functors which are abutments of corresponding spectral sequences. In particular, we shall need the associated five-term exact sequences. In the modular cases some of these appeared already in \cite{A82}. We repeat them here because they play a key role in our proofs in Section 3. Moreover, we want to illustrate both the similarities and the differences to the quantum case.

\subsection{Compositions of certain left exact functors}
Let $\mathcal A$, $\mathcal B$ and $\mathcal C$ be abelian categories and let $F: \mathcal A \rightarrow \mathcal C$ be the composite of two left exact functors $F_1: \mathcal A \rightarrow  \mathcal B$ and $F_2: \mathcal B \rightarrow \mathcal C$. Assume that both $\mathcal A$ and $\mathcal B$ have enough injectives and that the functor $F_1$ takes injective objects in $\mathcal A$ into objects in $\mathcal B$ which are acyclic for $F_2$. Then we have for each object $M$ in $\mathcal A$ a spectral sequence
$$ (R^{i}F_2 \circ R^jF_1)(M) \implies R^{i+j}F (M).$$
The corresponding five-term exact sequence is 
$$ 0 \to R^1F_2 (F_1 (M)) \to R^1F(M) \to F_2 (R^1F_1 (M)) \to R^2F_2 (F_1 (M)) \to R^2F(M).$$

In this paper we shall need several such spectral sequences and their associated five-term exact sequences. We will start by giving these sequences in the modular case (compare with Section 1 of \cite {A82} ) and then list the corresponding results in the quantum case. As we shall see this latter case is similar and at some points somewhat simpler.

\subsection{The modular case}

Consider a group $K$ with a normal subgroup $N$. Then the fixed point functor taking a $K$-module $M$ into $M^K$ has right derived functors which we denote $H^{i}(K, -)$. With the analogous notations for the derived functors of the fixed point functors for $N$ and $K/N$ we have as an example of the above the socalled Lyndon-Hochschild-Serre spectral sequence for a $K$-module $M$

$$ H^{i}(K/N,H^j(N,M)  \implies H^{i+j}(K,M).$$

\subsubsection{The fixed point functors for $G$ and $B$}
Consider the fixed point functors for our algebraic group $G$ and for its normal subgroup scheme $G_1$. As the category $\mathcal C_p$ does not have enough injectives we enlarge it to the category $\tilde {\mathcal C_p}$ consisting of all rational $G$-modules. Then in analogy with the above we have for each $M \in \tilde{\mathcal C_p}$ the spectral sequence

$$ H^{i}(G,H^j(G_1,M)^{(-1)})  \implies H^{i+j}(G,M).$$
The corresponding five-term exact sequence
\begin{equation} \label{m-1} 0 \to H^1(G,H^0(G_1,M)^{(-1)}) \to H^1(G,M) \to H^0(G, H^1(G_1,M)^{(-1)})
\end{equation}
$$ \hskip 7 cm \to H^2(G,H^0(G_1,M)^{(-1)}) \to H^2(G,M).$$

Now the fixed point functor for $G$ is equal to $\Hom_G (k, -)$ ($k$ denoting the trivial $1$-dimensional $G$-module). The $i$'th right derived functor of $\Hom_G (k, -)$ is $\Ext_G^{i}(k,-)$. If the module $M$ in (\ref{m-1}) has the form $M = L \otimes M'$ for some $L$ with $L = (L_0 \otimes L_1^{(1)})^*$ (here and elsewhere the index $^*$ on a module means the dual module)
 where $L_0, L_1 \in \mathcal C_p$ and $M' \in \tilde{\mathcal C_p}$. Then the above five-term exact sequence is
\begin{equation} \label{m-2} 0 \to \Ext^1_G(L_1,\Hom_{G_1}(L_0,M')^{(-1)}) \to \Ext^1_G(L,M) \to \Hom_G(L_1,\Ext^1_{G_1}(L_0,M')^{(-1)})
\end{equation} 
$$ \hskip 7cm \to \Ext^2_G(L_1, \Hom_{G_1}(L_0,M')^{(-1)}) \to \Ext_G^2(L,M).$$

The above sequences clearly have analogues when we replace $G$ by a closed subgroup. In particular, we shall need them in the case of the Borel subgroup $B$ of $G$. Here we have the following five-term sequence:

Let $L, E $ be two finite dimensional $B$-modules and let $\mu \in X$. Then in analogy with the above we have the exact sequence
\begin{equation} \label{m-3} 0 \to \Ext^1_B(\mu, \Hom_{B_1}(L, E)^{(-1)}) \to \Ext^1_B(L\otimes \ell \mu, E) \to \Hom_B(\mu, \Ext^1_{B_1}(L, E)^{(-1)})
\end{equation}
 $$ \hskip 7cm \to \Ext^2_B(\mu, \Hom_{B_1}(L,E)^{(-1)}) \to \Ext_B^2(L \otimes \ell \mu,E).$$
Here we could of course replace $\mu$ by an arbitrary finite dimensional $B$-module.

\subsubsection{Induction functors}

Let $\tilde {\mathcal B}$ denote the category of rational $B$-modules. The induction functor  $\Ind_B^G: \tilde {\mathcal B} \to \tilde {\mathcal C_p}$ will also be denoted $H^0(G/B,-)$ and its derived functors $H^{i}(G/B, -)$. In the following sections we often abbreviate and write  $H^{i}_p$ instead of $H^{i}(G/B, -)$, respectively $H^{i}_0$ instead of $H^{i}(G_0/B_0, -)$.  Note that the composite $H^0(G, H^0(G/B, - ))$ equals the $B$-fixed point functor $H^0(B, -)$. Therefore we have  for each rational $B$-module $E$ a spectral sequence
$$ H^{i}(G, H^j(G/B, E)) \implies H^{i+j}(B,E)$$
with corresponding five-term exact sequence
\begin{equation} \label{m-4}0 \to  H^1(G, H^0(G/B, E)) \to H^1(B, E) \to H^0(G, H^1(G/B, E)) 
\end{equation}
$$ \hskip 7 cm \to H^2(G, H^0(G/B, E)) \to H^2(B, E).$$
Now if $M$ is a finite dimensional $G$-module we have the tensor identity $H^j(G/B, M \otimes E) \simeq M \otimes H^j(G/B, E)$. As in the previous section this leads to the following five-term exact sequence (valid for any $M \in \mathcal C_p$ and any rational $B$-module $E$)
\begin{equation} \label{m-5}0 \to  \Ext_G^1(M, H^0(G/B,E)) \to \Ext_B^1(M,E) \to \Hom_G(M, H^1(G/B, E)) 
\end{equation}
$$ \hskip 5cm \to 
 \Ext_G^2(M, H^0(G/B,E)) \to \Ext_B^2(M,E).$$
 
\subsubsection{Relations between the $B_1$- and the $G_1$-fixed point functors}
It is not hard to establish an equivalence between the following two compositions of functors 
$$\Ind_B^G \circ H^0(B_1, -)^{(-1)} \text { and }H^0(G_1, \Ind_B^G -)^{(-1)}$$
(for a proof see \cite{AJ}, Proposition 3.1).

Denote this functor $F$. Then we have two spectral sequences with abutment $R^{i}F$ and hence for any rational $B$-module $E$ the following two five-term exact sequences
\begin{equation} \label{m-6}0 \to H^1(G/B, H^0(B_1,E)^{(-1)}) \to R^1F(E) \to H^0(G/B, H^1(B_1, E)^{(-1)})
\end{equation} 
$$\hskip 7cm \to 
H^2(G/B, H^0(B_1,E)^{(-1)}) \to R^2F(E),$$
and 
\begin{equation} \label{m-7}0 \to H^1(G_1, H^0(G/B, E))^{(-1)} \to R^1F(E) \to H^0(G_1, H^1(G/B, E))^{(-1)} 
\end{equation} 
$$\to
H^2(G_1, H^0(G/B,E ))^{(-1)} \to R^2F(E).$$
 
\subsection{The quantum case}

In the quantum case the Frobenius homomorphism is not an endo-functor on $U_q$. Instead it is a functor $Fr_q: U_q \to U_0$. As mentioned in Section 1 the category $\mathcal C_0$ is semisimple, and this fact makes some of the $q$-analogues of the results in Sections 2.2 easier. Moreover, $\mathcal C_q$ has enough injectives (in fact the linkage principle in $\mathcal C_q$, see \cite{An03}, implies that the $q$-Steinberg module $St_q = L_q((\ell - 1)\rho)$ is injective and the natural inclusion of the trivial module $L_q(0) = \C$ into $St_q \otimes St_q$ gives that any $M \in \mathcal C_q$ is a submodule of the injective module $St_q \otimes St_q \otimes M$). This means that we do not need to enlarge $\mathcal C_q$ like we did in the case $\mathcal C_p$.

Below we list the $q$-analogues of the key results from Section 2.2.

\subsubsection{Quantum fixed point functors}

As $\mathcal C_0$ is semisimple the spectral sequence corresponding to the modular analogue in Section 2.2.1 reduces to the following isomorphism valid for all $M \in \mathcal C_q$ and all $j \geq 0$
$$ H^0(U_0, H^j(u_q, M)^{[-q]}) \simeq H^j(U_q, M).$$
Replacing $M$ by a module of the form $L^* \otimes M$ with $L = L_0 \otimes L_1^{[q]}$,  $L_0, M \in \mathcal C_q$ and $L_1 \in \mathcal C_0$ this may also be formulated as follows 
\begin{equation} \label{q-1} \Ext^j_{\mathcal C_q} (L,M) \simeq \Hom_{\mathcal C_0}(L_1, \Ext^j_{u_q}(L_0, M)^{[-q]}).
\end{equation}

Consider the subalgebra $B_q = U_q^- U_q^0$ of $U_q$ and let $b_q$ denote the small quantum group for $B_q$ then the five-term sequence analogous to  (\ref{m-3}) is
\begin{equation} \label{q-2} 0 \to \Ext^1_{B_0}(L_1, \Hom_{b_q}(L_0, M)^{[-q]}) \to \Ext^1_{B_q}(L,M) \to \Hom_{B_0}(L_1, \Ext^1_{b_q}(L_0,M)^{[-q]})
\end{equation}
$$ \hskip 7cm \to \Ext^2_{B_0}(L_1, \Hom_{b_q}(L_0, M)^{[-q]}) \to \Ext^2_{B_q}(L,M).$$
Again $L = L_0 \otimes L_1^{[q]}$ and we assume that both  $L_0$ and $M$ are finite dimensional $B_q$-modules, while $L_1$ is a finite dimensional $B_0$-module.

\subsubsection{Quantum induction functors}
Denote by $H^0(U_q/B_q, - )$ the quantum induction functor from $B_q$ to $U_q$, see \cite{APW}, and let $H^j(U_q/B_q, - )$ be its right derived functors (in the following sections denoted just $H^j_q$).  Then in analogy with (\ref{m-5})
we have for each $M \in \mathcal C_q$ and each finite dimensional $B_q$-module $E$ the five-term exact sequence
\begin{equation} \label{q-3} 0 \to \Ext_{\mathcal C_q}^1(M, H^0(U_q/B_q,E)) \to \Ext_{B_q}^1(M,E) \to \Hom_{\mathcal C_q}(M, H^1(U_q/B_q, E)) 
\end{equation}
$$ \hskip 7 cm \to \Ext_{\mathcal C_q}^2(M, H^0(U_q/B_q,E)) \to \Ext_{B_q}^2(M,E).$$

\subsubsection{Relations between $b_q$- and $u_q$- fixed point functors} 
Define a functor $F_q$ from the category of finite dimensional $B_q$-modules to $\mathcal C_0$ by
$$ H^0(G_0/B_0, H^0(b_q, -)^{[-q]}) = F_q = H^0(u_q, H^0(U_q/B_q, -))^{[-q]}.$$
Then in analogy with (\ref{m-6}) and (\ref{m-7}) we have for any finite dimensional $B_q$-module $E$ 
the following two five-term exact sequences
\begin{equation} \label{q-4} 0 \to H^1(G_0/B_0, H^0(b_q,E)^{[-q]}) \to R^1F_q(E) \to H^0(G_0/B_0, H^1(b_q, E)^{[-q]})
\end{equation}
$$\hskip 7cm \to H^2(G_0/B_0, H^0(b_q,E)^{[-q]}) \to R^2F_q(E),$$
and 
\begin{equation} \label{q-5} 0 \to H^1(u_q, H^0(U_q/B_q, E))^{[-q]} \to R^1F_q(E) \to H^0(u_q, H^1(U_q/B_q, E))^{[-q]}
\end{equation} 
$$ \hskip 7 cm \to H^2(u_q, H^0(U_q/B_q,E ))^{[-q])} \to R^2F_q(E).$$

\section{Self-extensions of simple modules for $G_1$ and for $u_q$}
It is wellknown/easy to prove that for any $\lambda \in X^+$ we have
$$ \Ext^1_{\mathcal C_p} (L_p(\lambda, L_p(\lambda)) = 0 = \Ext_{\mathcal C_q}^1(L_q(\lambda), L_q(\lambda)),$$
i.e. in $\mathcal C_p$ and in $\mathcal C_q$ there are no self-extensions of simple modules. 

In this section we prove that the same is almost always the case when we pass to finite dimensional $G_1$-modules, respectively $u_q$-modules.

\subsection{Self-extensions for $G_1$- and $u_q$-modules}

Recall that by {\it the very special case} we understand the case where $R$ is of type $C$ and $\ell = 2$, respectively $p =2$. Whenever we are in this case we will in this and the following sections use the Bourbaki notation from Planche III in \cite{B} (except that we will use $n$ instead of $\ell$ to denote the rank of $R$). In particular, the unique simple root in $C_n$ belonging to $2X$ is $\alpha_n = 2 \epsilon_n$. 

\begin{thm} Assume we are not in {\it the very special case}. Then we have for all $\lambda \in X_1$ 
\begin{enumerate}
\item $\Ext^1_{u_q}(L_q(\lambda), L_q(\lambda)) = 0$,
\item $\Ext^1_{G_1}(L_p(\lambda), L_p(\lambda)) = 0$.
\end{enumerate}
\end{thm}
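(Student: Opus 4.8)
The plan is to prove (1) in full and to obtain (2) by the identical argument with $(u_q,b_q,\ell)$ replaced by $(G_1,B_1,p)$. Fix $\lambda\in X_1$ and write $L=L_q(\lambda)$, so that $L$ is simple as a $u_q$-module. The starting point is to feed the $U_q$-module $L\otimes L^*$, restricted to $B_q$, into the two spectral sequences attached to the functor $F_q$. Set $E=(L\otimes L^*)|_{B_q}$. By the tensor identity and quantum Kempf vanishing \cite{APW} one has $H^0(U_q/B_q,E)=L\otimes L^*$ and $H^{j}(U_q/B_q,E)=0$ for $j>0$, so the spectral sequence underlying (\ref{q-5}) collapses and yields
$$ R^{n}F_q(E)\simeq H^{n}(u_q,L\otimes L^*)^{[-q]}=\Ext^{n}_{u_q}(L,L)^{[-q]}. $$
Thus it suffices to show $R^1F_q(E)=0$ outside the very special case.

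Next I would exploit the \emph{second} description of $R^\bullet F_q$, through $b_q$. The five-term sequence (\ref{q-4}) gives
$$ 0\to H^1\big(G_0/B_0,H^0(b_q,E)^{[-q]}\big)\to R^1F_q(E)\to H^0\big(G_0/B_0,H^1(b_q,E)^{[-q]}\big), $$
so $R^1F_q(E)$ is squeezed between the two outer terms. For the left-hand term, $H^0(b_q,E)=\Hom_{b_q}(L,L)$; since $\lambda\in X_1$ the only weight-zero $b_q$-endomorphisms are scalars, and the restricted bound $\langle\lambda,\alpha^\vee\rangle\leq\ell-1$ excludes the nonzero weights that Bott's theorem would require for a contribution. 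Hence the left-hand term is $H^1(G_0/B_0,\C)=0$, and we are reduced to proving $H^0\big(G_0/B_0,H^1(b_q,L\otimes L^*)^{[-q]}\big)=0$ outside the very special case.

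This remaining vanishing is the heart of the matter, and here I would bring in the explicit degree-one small-Borel cohomology computed in \cite{An07} and \cite{AR}. Being a module for $B_q/b_q\simeq B_0$, the group $H^1(b_q,L\otimes L^*)$ has all weights in $\ell X$, and a nonzero section over $G_0/B_0$ requires such a weight $\mu$ with $\mu/\ell\in X^+$. The degree-one classes are governed by the lowering generators $F_\alpha$ ($\alpha\in S$), so the weights in play have the form $\eta-\alpha$ with $\eta$ a weight of $L\otimes L^*$; writing $\eta-\alpha=\ell\kappa$ with $\kappa\in X^+$ and using the restricted bound (which keeps $\eta$ in a narrow range) leaves no solution unless $\ell$ is as small as $2$ and the relevant root is the long root $\alpha_n=2\epsilon_n$ of a type-$C$ system --- precisely the very special case. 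Outside it the displayed term vanishes, giving $\Ext^1_{u_q}(L,L)=0$ and hence (1).

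For (2) the same argument runs over $B_1\subset G_1$: one takes $E=(L_p(\lambda)\otimes L_p(\lambda)^*)|_{B}$, works inside $\tilde{\mathcal C_p}$, and uses the five-term sequences (\ref{m-6}) and (\ref{m-7}) in place of (\ref{q-4}) and (\ref{q-5}), with the degree-one $B_1$-cohomology supplied by the Andersen--Jantzen computations reproduced in Chapter~12 of Jantzen's book. I expect the main obstacle to be exactly the computation of $H^1(b_q,L\otimes L^*)$ (respectively $H^1(B_1,L\otimes L^*)$) with these non-trivial coefficients: one must identify which of the $F_\alpha$-classes are genuine cohomology rather than artifacts of the internal action on $L$ --- in particular the weight-zero classes, coming from $\kappa=0$ above, must be shown not to survive --- and then carry out the type-by-type weight bookkeeping that singles out $\alpha_n=2\epsilon_n$ with $\ell=2$ (respectively $p=2$) as the unique configuration producing a dominant survivor, namely the very special case excluded in the hypothesis.
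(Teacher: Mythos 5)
Your opening reduction is correct and is actually cleaner than the paper's own: with $E=(L\otimes L^*)|_{B_q}$, the tensor identity plus quantum Kempf vanishing make $H^j(U_q/B_q,E)=0$ for $j>0$, so the spectral sequence behind (\ref{q-5}) collapses and $R^1F_q(E)\simeq \Ext^1_{u_q}(L,L)^{[-q]}$; the paper instead takes $E=L_q(\lambda)^*\otimes\lambda$ and must add a final step using $0\to L_q(\lambda)\to H^0_q(\lambda)\to Q\to 0$ together with $\Hom_{u_q}(L_q(\lambda),Q)=0$. Your treatment of the left-hand term of (\ref{q-4}) is under-justified but fillable: by the PBW basis of $u_q^-$ every weight of $\Hom_{b_q}(L,L)^{[-q]}$ has the form $-\nu$ with $\ell\nu=\sum_{\gamma\in R^+}n_\gamma\gamma$, $0\leq n_\gamma\leq \ell-1$, and the Bott analysis (exactly the paper's Claim 5 style bookkeeping) shows no such $\nu\neq 0$ contributes to $H^1(G_0/B_0,-)$.

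The genuine gap is the right-hand term: you must prove $H^0\bigl(G_0/B_0,H^1(b_q,L\otimes L^*)^{[-q]}\bigr)=0$, and the proposal contains no method that can do this. Observe that $H^1(b_q,L\otimes L^*)\simeq \Ext^1_{b_q}(L,L)$, so what remains is a self-extension vanishing statement over $b_q$ --- a statement of essentially the same nature and difficulty as the theorem itself, pushed one step down. The references you invoke (\cite{An07}, \cite{AR}, and Chapter 12 of \cite{RAG} in the modular case) compute $B$-, $B_q$- or $B_1$-cohomology with \emph{character} coefficients and say nothing about coefficients in $L\otimes L^*$; filtering the target $L$ by its weights does not rescue this, since $\Ext^1_{b_q}(L,\mu)$ is genuinely nonzero for many weights $\mu$ of $L$ (these are exactly the groups producing extensions between different simples), so no termwise vanishing is available. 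Your weight heuristic gives only a necessary condition on weights, and, as you concede, it does not exclude the $\kappa=0$ classes, nor a priori various nonzero dominant $\kappa$; deciding which weight-admissible classes actually survive in cohomology is precisely the hard part, and you defer it. This is where the paper's choice of coefficients is the idea you are missing: taking $E=L_q(\lambda)^*\otimes\lambda$ turns the problematic group into $\Ext^1_{b_q}(L_q(\lambda),\lambda)$ with a one-dimensional target, and then (\ref{q-2}) compares each weight space $\Hom_{B_0}(\nu,\Ext^1_{b_q}(L_q(\lambda),\lambda)^{[-q]})$ with $\Ext^1_{B_0}(\nu,\C)$, $\Ext^1_{B_q}(L_q(\lambda),\lambda-\ell\nu)$ and $\Ext^2_{B_0}(\nu,\C)$, the middle group being computable by quantum Kempf vanishing and the simple-socle theorem for $H^1_q$ of line bundles (which is also exactly where the very special case $\ell\nu=a\alpha$ with $\nu=\epsilon_n$ is detected). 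Nothing comparable is available for $\Ext^1_{b_q}(L,L)$, so as written your argument reduces the theorem to an unproved statement at least as hard as the original one.
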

Below we give a proof of this theorem for the quantum case. We  follow to some extent the strategy for the modular case, see \cite{A82}.

In Section 3.4 we give examples which show that in {\it the very special case} self-extensions do occur.

\subsection{A key lemma}
The following lemma will be the key ingredient in our proof of Theorem 3.1, see Section 3.3 below.

\begin{lem} Let $\lambda \in X_1$. 
\begin{enumerate}
\item $\Hom_{b_q}(L_q(\lambda), \mu) = \delta_{\lambda, \mu} \C$ for all $\mu \in X_1$.
\item Assume we are not in {\it the very special case}. Then $\Ext^1_{b_q}(L_q(\lambda), \lambda) = 0.$
\item In {\it the very special case} we have for all $\mu \in X$
$$ \Hom_{B_0}(\mu, \Ext_{b_q}^1(L_q(\lambda), \lambda)^{[-q]}) = \begin{cases} {\C \text { if } \mu = \epsilon_n \text { and } \langle \lambda, \alpha_n^\vee \rangle = 0,}\\
{0} \text{ otherwise.} \end{cases}$$
 
\end{enumerate}
\end{lem}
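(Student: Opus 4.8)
The plan is to compute $\Hom_{b_q}$ and $\Ext^1_{b_q}$ directly from the structure of the small quantum Borel $b_q$, exploiting the decomposition $b_q = u_q^- u_q^0$ and the fact that the simple $b_q$-modules are one-dimensional, given by characters $\mu \in X_1$ on which $u_q^-$ acts trivially. For part (1), the key observation is that $L_q(\lambda)$, restricted to $b_q$, has a $u_q^-$-head isomorphic to the highest weight line $\lambda$, and more generally its weights lie in $\lambda - \N R^+$ with the weight $\lambda$ occurring with multiplicity one. Since any $b_q$-homomorphism into a one-dimensional module $\mu$ must kill $u_q^-$ and factor through the $u_q^-$-coinvariants, and the highest weight $\lambda$ is the unique weight in $X_1$ that can survive (the other weights of $L_q(\lambda)$ are strictly lower and cannot equal a dominant $\mu \in X_1$ once $\lambda$ is fixed), I would deduce $\Hom_{b_q}(L_q(\lambda), \mu) = \delta_{\lambda,\mu}\,\C$. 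The mild subtlety is confirming that no lower weight of $L_q(\lambda)$ accidentally coincides with a weight $\mu \in X_1$ and carries a genuine $b_q$-map; this is controlled by the restriction $\langle \lambda, \alpha^\vee\rangle \le \ell - 1$ defining $X_1$, which keeps the relevant weights within a single alcove-type region.

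For parts (2) and (3), I would compute $\Ext^1_{b_q}(L_q(\lambda), \lambda)$ using the standard identification of first extensions over $b_q$ with a relative $H^1$, reducing via the triangular structure to the cohomology of $u_q^-$ (equivalently $u_q^+$ after a suitable duality) with coefficients in a weight space, as in the Andersen--Polo--Wen computations of $B_q$-cohomology invoked in \cite{An07} and \cite{AR}. Concretely, $\Ext^1_{b_q}(L_q(\lambda), \lambda)$ should be expressible through $H^1(u_q^-, \Hom_\C(L_q(\lambda), \lambda))$ refined by the $u_q^0$-weight grading, and the first cohomology $H^1(u_q^-, \C)$ is governed by the simple roots: each simple root $\alpha \in S$ contributes a class in weight $-\alpha$ (or $\alpha$ after dualizing). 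The generic vanishing in part (2) then follows because, for $\lambda \in X_1$ not in the very special case, the relevant weight $\alpha$ never matches up against the weights of $L_q(\lambda)$ in the degree-one piece in a way that produces a nonzero $B_0$-equivariant class after applying the Frobenius untwist $[-q]$; the inequalities cutting out $X_1$ force every potential contribution to land outside the dominant region or to be annihilated.

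The heart of the argument, and the place where the very special case splits off, is part (3). Here I would isolate precisely the root $\alpha_n = 2\epsilon_n$ of type $C_n$, which is the unique simple root lying in $2X$ when $\ell = 2$. The point is that the divided-power / Frobenius structure for $U_q$ treats this doubled short-root direction differently: when $\ell = 2$ the class in $H^1(u_q^-,\C)$ sitting in weight $\alpha_n = 2\epsilon_n$ descends, under the Frobenius untwist $[-q]$ which divides weights by $\ell = 2$, to a class in $B_0$-weight $\epsilon_n$. Combined with the condition $\langle \lambda, \alpha_n^\vee\rangle = 0$ (so that $L_q(\lambda)$ does not already ``use up'' the $\alpha_n$-direction and the extension is not killed by a nonzero action), this yields exactly the one-dimensional space $\C$ in $B_0$-weight $\epsilon_n$, and zero otherwise. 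I expect the main obstacle to be bookkeeping the effect of the Frobenius untwist on the weight of the extension class: I must verify carefully that $\alpha_n = 2\epsilon_n$ is the only simple root whose associated cohomology class survives the untwist to give a weight in $X$ (namely $\epsilon_n$), that all other simple roots contribute nothing because $\alpha/\ell \notin X$ for them, and that the auxiliary condition $\langle\lambda,\alpha_n^\vee\rangle = 0$ is exactly what is needed for the corresponding $\Ext^1_{b_q}$ class to be nonzero and $B_0$-equivariant. Establishing this weight match precisely — rather than merely up to a possible dimension — is the delicate step, and I would lean on the explicit $b_q$-cohomology calculations of \cite{An07} and \cite{AR} to pin it down.
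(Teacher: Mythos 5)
Your part (1) is fine, though it takes a different route from the paper: you factor a $b_q$-map through the $u_q^-$-coinvariants of $L_q(\lambda)$ (which are the highest weight line because $L_q(\lambda)=u_q^-\cdot v_\lambda$, by Lusztig's simplicity of $L_q(\lambda)|_{u_q}$ for $\lambda\in X_1$), whereas the paper uses Frobenius reciprocity $\Hom_{b_q}(L_q(\lambda),\mu)=\Hom_{u_q}(L_q(\lambda),Z_q(\mu))$ and the simple socle of the baby Verma module $Z_q(\mu)=\Ind_{b_q}^{u_q}\mu$. Two small repairs to your write-up: the parenthetical about lower weights of $L_q(\lambda)$ not lying in $X_1$ is both false and irrelevant (lower weights can lie in $X_1$; what matters is that the coinvariants sit in weight $\lambda$ only), and the weight comparison happens through characters of $u_q^0$, i.e.\ modulo $\ell X$, so you need the (easy) observation that $X_1$ injects into $X/\ell X$.

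Parts (2) and (3), however, contain a genuine gap: you never compute the group you set out to compute. The identification $\Ext^1_{b_q}(L_q(\lambda),\lambda)\simeq H^1\bigl(u_q^-,L_q(\lambda)^*\otimes\lambda\bigr)^{u_q^0}$ is a correct starting point, but from there you argue entirely via $H^1(u_q^-,\C)$ (classes dual to the $F_\alpha$, $\alpha\in S$) and a ``weight matching'' heuristic. That heuristic is only valid when the coefficient module is a \emph{trivial} $u_q^-$-module; $L_q(\lambda)^*\otimes\lambda$ is not trivial unless $\lambda=0$, so $H^1(u_q^-,M)$ is not $H^1(u_q^-,\C)\otimes(\text{weights of }M)$, and $1$-cocycles need not be supported on the generators. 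Concretely, your argument proves (3) only for $\lambda=0$, and proves nothing in case (2); in particular it gives no mechanism producing the condition $\langle\lambda,\alpha_n^\vee\rangle=0$, which you acknowledge as ``the delicate step'' but then defer to \cite{An07} and \cite{AR} --- references which compute $B$-, respectively $B_q$-cohomology of \emph{one-dimensional} modules and do not cover $b_q$-cohomology with coefficients in $L_q(\lambda)^*\otimes\lambda$. The paper avoids this direct computation altogether: it tests $E_1=\Ext^1_{b_q}(L_q(\lambda),\lambda)^{[-q]}$ against every character $\nu\in X$ via the five-term exact sequence (2.9) for $b_q\subset B_q$, which sandwiches $\Hom_{B_0}(\nu,E_1)$ between $\Ext^1_{B_q}(L_q(\lambda),\lambda-\ell\nu)$ and $\Ext^2_{B_0}(\nu,\C)$. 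The two $B_0$-terms are evaluated by Bott's theorem; the $B_q$-term is evaluated by splitting into the cases $\lambda-\ell\nu\in X^+$ (Weyl module/dual Weyl module argument) and $\lambda-\ell\nu\notin X^+$ (quantum Kempf vanishing plus the simple-socle theorem for $H^1_q$ of a line bundle from \cite{A81}, which is exactly where $\nu\in S$, or $\nu=\epsilon_n$ in the very special case, emerges); and the potential $\Ext^2_{B_0}$ obstructions are killed by a weight bound on $E_1$ proved with the Steinberg embedding $\lambda\subset St_q\otimes((\ell-1)\rho+\lambda)$. Some substitute for these global inputs --- or an honest cocycle computation of $H^1(u_q^-,L_q(\lambda)^*\otimes\lambda)$ --- is indispensable, and your proposal supplies neither.
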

\begin{proof} Let $\mu \in X_1$ and set $Z_q(\mu) = \Ind_{b_q}^{u_q} (\mu)$. Then $Z_q(\mu)$ has simple $u_q$-socle equal to $L_q(\mu)$. Hence (1) follows via the identity $\Hom_{b_q}(L_q(\lambda), \mu) =  \Hom_{u_q}(L_q(\lambda), Z_q(\mu))$.

To prove (2) and (3) we set $E_j = \Ext^j_{b_q}(L_q(\lambda), \lambda)^{[-q]}$ and observe that these  $B_0$-modules are finite dimensional. According to (1) we have $E_0 = \C$. Clearly,  $E_1 = 0$ if and only if $\Hom_{B_0}(\nu, E_1) = 0$ for all $\nu \in X$. So consider $\nu \in X$. By (\ref{q-2}) we have the exact sequence
\begin{equation} \label{key} 0 \to \Ext^1_{B_0}(\nu, \C) \to \Ext^1_{B_q}(L_q(\lambda), \lambda - \ell \nu) \to \Hom_{B_0} (\nu, E_1) \to  \Ext^2_{B_0}(\nu, \C).
\end{equation}
We now have the following claims about the terms in this sequence
\vskip 0,5 cm
{\bf Claim 1} The first term is $\C$ if $\nu \in S$, and $0$ otherwise.

{\bf Claim 2} Assume we are not in {\it the very special case}. Then the second term is $\C$ if $\nu \in S$, and $0$ otherwise.

{\bf Claim 3} In {\it the very special case} the second term is $\C$ if either $\nu \in S$,  or $\nu = \epsilon_n$ and $\langle \lambda, \alpha_n \rangle = 0$, 
and $0$ otherwise.  

{\bf Claim 4} The fourth term is $\C$ if $\nu = - s_\alpha s_\beta \cdot 0$ for some $\alpha, \beta \in S$ with $\alpha \neq \beta$, and $0$ otherwise.

{\bf Claim 5} If $\mu$ is a weight of $\Ext^1_{b_q}(L_q(\lambda), \lambda)^{[-q]}$ then $\ell \mu = \sum_{\gamma \in R^+} n_\gamma \gamma$ for some $(n_\gamma)_{\gamma \in R^+}$ with $0 \leq n_\gamma \leq \ell -1$ for all $\gamma$. 
\vskip 0,5 cm
Here {\bf  Claim 1} and {\bf Claim 4} follow from Bott's theorem (\cite{Bo}) by noticing that we have 
$$\Ext^j_{B_0}(\nu, \C) \simeq H^j(B_0, -\nu) \simeq H^0(G_0, H^j_0(-\nu)).$$ 
The last term here is $\C$ if there exists $w \in W$ of length $j$ such that $w \cdot (-\nu) = 0$, and $0$ otherwise.

Consider then {\bf Claim 2} and {\bf Claim 3}. We first observe that the second term in (\ref{key}) equals $\Ext^1_{\mathcal C_q}(L_q(\lambda), H^0_q(\lambda -\ell \nu))$ if $\lambda - \ell \nu \in X^+$. 
Now the Weyl module $\Delta_q(\lambda) \in \mathcal C_q$ surjects onto $L_q(\lambda)$ and the kernel $K$ of this surjection has composition factors with highest weights $\mu$ that are all strictly less than $\lambda$. The short exact sequence $0 \to K \to \Delta_q(\lambda) \to L_q(\lambda) \to 0$ and the fact that $\Ext_{\mathcal C_q}^1(\Delta_q(\lambda), \nabla_q(\mu)) = 0$ for all $\mu \in X^+$ (here $\nabla_q(\mu) =  H_q^0(\mu)$ is the dual Weyl module in $\mathcal C_q$ with highest weight $\mu$)  imply that $\Ext^1_{\mathcal C_q}(L_q(\lambda), \nabla_q(\lambda -\ell \nu))$ is a quotient of $\Hom_{\mathcal C_q} (K, \nabla_q(\lambda - \ell \nu))$. Hence if $\Ext^1_{\mathcal C_q}(L_q(\lambda), \nabla_q(\lambda -\ell \nu))$ is non-zero, then we must have that  $L_q(\lambda - \ell \nu)$ is a composition factor of $K$. If that is the case then $\lambda - \ell \nu < \lambda$, i.e. $\ell \nu > 0$. However, our assumption $\lambda - \ell \nu \in X^+$ means that $\langle \nu, \alpha^\vee \rangle \leq 0$ for all $\alpha \in S$, and this makes the inequality  $\ell \nu > 0$  impossible. 

On the other hand, if $\lambda - \ell  \nu \notin X^+$ then by the quantum Kempf's vanishing theorem  (\cite{SRH}) we get $\Ext^1_{B_q}(L_q(\lambda), \lambda - \ell \nu) \simeq \Hom_{\mathcal C_q}(L_q(\lambda), H^1_q(\lambda - \ell \nu))$.
Recall that if $H^1_q(\lambda - \ell \nu)) \neq 0$ then it has simple socle (see \cite{A81}) with highest weight $\lambda - \ell \nu + a \alpha$ for an appropriate simple root $\alpha$ and some 
$a \in \Z_{>0}$. We see that this highest weight only equals $\lambda$ if $\ell \nu = a \alpha$.  When we are not in {\it the very special case } it follows that   $L_q(\lambda) \subset H^1_q(\lambda - \ell \nu)$ if and only if $\nu \in S$. This proves {\bf Claim 2}. 

In {\it the very special case} similar arguments as above show that $H^1_q(\lambda - 2 \nu)$ has socle $L_q(\lambda)$ if either $\nu \in S$  or $\nu = \epsilon_n$  This proves {\bf Claim 3}.  

Finally, to prove {\bf Claim 5} we
consider the inclusion of $B_q$-modules $\lambda \subset St_q \otimes ((\ell - 1)\rho + \lambda)$ and denote the quotient  by $Q$. As $St_q$ is an injective $U_q$-module it is also injective for $u_q$ and $b_q$. Hence the short exact sequence of $B_q$-modules
$$ 0 \to \lambda \to St_q \otimes ((\ell - 1)\rho + \lambda) \to Q \to 0$$
gives a surjection $\Hom_{b_q}(L_q(\lambda), Q) \to \Ext_{b_q}^{1}(L_q(\lambda), \lambda)$. Using part (1) of the Lemma this implies that the weights $\mu$ of $E_1$ must satisfy $\ell \mu = \sum_{\gamma \in R^+} n_\gamma \gamma$ for some $(n_\gamma)_{\gamma \in R^+}$ with $0 \leq n_\gamma \leq \ell -1$ as claimed.  
\vskip ,5 cm 
Now we turn to the proof of (2) in our lemma. Here we are not in {\it the very special case} and the combination of  Claims (1) and (2) shows that the first two terms in (\ref{key}) coincide. Then Claim 4 implies that (2) holds except possibly when $\nu = - s_\alpha s_\beta \cdot 0$  for some simple roots $\alpha, \beta$ with $\alpha \neq \beta$.  But a weight  $\nu$ of this form cannot satisfy Claim (5). In fact, note that  $- s_\alpha s_\beta \cdot 0 = a\alpha + \beta$ with $a = -\langle \beta, \alpha^\vee \rangle  + 1 \in \{1, 2, 3, 4\}$. 
So the equality $- \ell  s_\alpha s_\beta \cdot 0 = \sum_\gamma n_\gamma \gamma$ says
$\ell(a\alpha + \beta) = \sum_\gamma n_\gamma \gamma$. In particular, $n_\gamma = 0$ unless $\gamma \in \spa_{\Z_{\geq 0}}\{\alpha, \beta\}$.
If $a=1$ we have $\spa_{\Z_{\geq 0}}\{\alpha, \beta\} \cap R^+ =\{\alpha, \beta\}$ so the equality is $\ell(\alpha + \beta) = n_\alpha \alpha + n_\beta \beta$.  This is clearly incompatible with the inequalities on $n_\alpha$ and $n_\beta$. We leave to the reader to check the remaining $3$ possible values of $a$. 

Finally, we prove part (3). Here we first observe that the last part of the proof of (2) also works in {\it the very special case}, i.e. the map between the third and fourth terms of (\ref{key}) is always zero. By Claims (1) and (3) we then deduce that 
$$ \Hom_{B_0}(\nu, E_1) = \C \text { if } \nu = \epsilon_n \text { and } \langle \lambda, \alpha_n^\vee \rangle = 0,$$
and that otherwise $ \Hom_{B_0}(\nu, E_1) = 0$.

\end{proof}

\subsection{Proof of Theorem 3.1(1)}

Consider the short exact sequence
$$ 0 \to L_q(\lambda) \to H^0_q(\lambda) \to Q = H^0_q(\lambda)/L_q(\lambda) \to 0.$$
Note that $\Hom_{u_q}(L_q(\lambda), Q) = 0$ as otherwise we would have that for some $\nu \in X^+$ the simple $U_q$-module $L_q(\lambda) \otimes L_0(\nu)^{[q]}$ would be a submodule of $Q$. But $\lambda + \ell \nu$ is not a weight of $Q$ for any $\nu \in X^+$. 

To finish the proof it will therefore be enough to check that 
\begin{equation} \label{ext}
\Ext^1_{u_q}(L_q(\lambda), H^0_q(\lambda)) = 0.
\end{equation}

To see this we will use the two exact sequences (\ref{q-4}) and (\ref{q-5}) with $E = L_q(\lambda)^* \otimes \lambda$. For this module the first term in (\ref{q-4}) equals $H^1_0( \Hom_{b_q}(L_q(\lambda), \lambda)^{[-q]})$. But by Lemma 3.2(1) this is isomorphic to $H^1_0(\C)$, which is zero by Bott's theorem (\cite{Bo}).

The third term in (\ref{q-4}) is $H^0_0 (\Ext^1_{b_q}(L_q(\lambda), \lambda)^{[-q]})$  and this vanishes by Lemma 3.2(2). 

We conclude thus that the second term $R^1F_q(L_q(\lambda)^*\otimes \lambda) = 0$. Inserting this in (\ref{q-5}) we get that the first term of this sequence is $0$. This is exactly the vanishing we are looking for.

\subsection{The very special case}

 In this subsection we consider the case where $U_q$ is the quantum group associated with  $\mathfrak g= \mathfrak{sp}_{2n}$ of type $C$ and $\ell = 2$
 (i.e. $q \in \{\pm i\}$).  
 
Recall that the simple root in $R \cap 2X$ is $\alpha_n = 2 \epsilon_n$ and that by Lemma 3.2(3) we have
$$ \Ext^1_{b_q}(L_q(\lambda), \lambda) = 0 \text { for all } \lambda \in X_1 \text { with } \langle \lambda, \alpha_n^\vee \rangle = 1.$$

The very same arguments as used above in the proof of Theorem 3.1.(1) then gives
\begin{equation}
\Ext^1_{u_q}(L_q(\lambda, L_q(\lambda)) = 0 \text { for all } \lambda \in X_1 \text { with } \langle \lambda, \alpha_n^\vee \rangle = 1.
\end{equation}

We shall now show that when $\langle \lambda, \alpha_n^\vee \rangle = 0$ self-extensions of $L_q(\lambda)$ can exist.
\vskip ,5 cm
 
 Consider the natural module $V$ for the complex algebraic group $G_0 = Sp_{2n}$. This module is $2n$-dimensional with weights $\{\pm \epsilon_i\mid i = 1, 2, \cdots , n\}$, each occuring with multiplicity $1$. 
 We set $V^+ = V_{\epsilon_1} \oplus V_{\epsilon_2} \oplus \cdots \oplus V_{\epsilon_n}$  and $V^- = V_{-\epsilon_1} \oplus V_{-\epsilon_2} \oplus \cdots \oplus V_{-\epsilon_n}$. Then $V^-$ is a $B_0$-subspace of $V$, whereas $V^+$ is a $B_0$-quotient of $V$. So we have the following short exact sequence of $B_0$-modules
$$ 0 \to V^- \to V \to V^+ \to 0.$$
Note that $V \simeq V^*$ (as $G_0$-modules) so that $V^+ \simeq (V^-)^*$ (as $B_0$-modules). 

We may identify $V$ with the  Weyl module $\Delta_0(\epsilon_1)$ for $G_0$. Note that the $B_0$-module $V \otimes \epsilon_1$ has exactly $3$ dominant weights, namely $2 \epsilon_1 = 2 \omega_1,  \epsilon_1 + \epsilon_2 = \omega_2$, and $0$. All remaining weights $\nu$ satisfy $\langle \nu, \alpha^\vee \rangle = -1$ for some $\alpha \in S$. It follows that
$$ V \otimes V = \Delta_0(2\omega_1) \oplus \Delta_0(\omega_2) \oplus \Delta_0(0).$$
We have (e.g. by Weyl's dimension formula)
$$ \dim \Delta_0(2 \omega_1) = 2 n^2 + n,$$ 
$$ \dim \Delta_0(\omega_2) = 2 n^2 - n + 1,$$
and of course 
$$ \dim \Delta_0(0) = 1.$$

Then in $\mathcal C_q$ we have 
\begin{lem}
The Weyl module $\Delta_q(2\omega_1)$ has composition factors $L_q(2\omega_1) = V^{[q]}$, $L_q(\omega_2) = \Delta_q(\omega_2)$ and $L_q(0) = \C$ (all with multiplicity $1$). Hence in particular 
$\Ext^1_{\mathcal C_q} (L_q(2\omega_1), L_q(0)) = \C$.
\end{lem}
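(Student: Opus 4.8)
The plan is to pin down the composition factors by a character count and then extract the extension group from the Loewy structure of the Weyl module. Since quantum Weyl modules have Weyl's character, $\Char\Delta_q(2\omega_1)=\Char\Delta_0(2\omega_1)$, whose only dominant weights are $2\omega_1,\omega_2$ and $0$; hence the composition factors of $\Delta_q(2\omega_1)$ lie among $L_q(2\omega_1),L_q(\omega_2),L_q(0)$. Since $2\omega_1=0+\ell\,\omega_1$ with $0\in X_1$, Lusztig's tensor product theorem gives $L_q(2\omega_1)=L_q(0)\otimes L_0(\omega_1)^{[q]}=V^{[q]}$, whose weights are the $\pm 2\epsilon_i$; in particular its $\omega_2$- and $0$-weight spaces vanish. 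Comparing $\omega_2$-multiplicities (which is $1$ in $\Delta_q(2\omega_1)$) forces $[\Delta_q(2\omega_1):L_q(\omega_2)]=1$. The whole count then rests on the claim that $\Delta_q(\omega_2)$ is simple: granting it, $L_q(\omega_2)$ has the character of $\Delta_0(\omega_2)$ and hence $0$-weight multiplicity $n-1$, while $\Delta_q(2\omega_1)$ has $0$-weight multiplicity $n$, so $[\Delta_q(2\omega_1):L_q(0)]=n-(n-1)=1$. This yields the three stated factors, each once.

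The main obstacle is the simplicity of $\Delta_q(\omega_2)$, which is exactly where the type $C$, $\ell=2$ degeneracy enters; I would establish it with the quantum Jantzen sum formula (\cite{APW}). Writing $\omega_2+\rho=(n+1,n,n-2,n-3,\dots,1)$ and using $s_{\alpha,m\ell}\cdot\omega_2=\omega_2-(\langle\omega_2+\rho,\alpha^\vee\rangle-m\ell)\alpha$, one checks that every reflected weight $s_{\alpha,m\ell}\cdot\omega_2$ with $0<m\ell<\langle\omega_2+\rho,\alpha^\vee\rangle$ lies on a wall, so its Weyl character $\chi(s_{\alpha,m\ell}\cdot\omega_2)$ vanishes, except for two: the long root $\alpha=2\epsilon_1$ at level $\ell$, contributing $-\chi(0)$, and the short root $\alpha=\epsilon_1+\epsilon_2$ at level $2\ell$, contributing $+\chi(0)$. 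These cancel, so $\sum_{i>0}\Char\Delta_q^i(\omega_2)=0$ and $\Delta_q(\omega_2)=L_q(\omega_2)$. The point requiring care is the coefficient convention: in the quantum case each quantum integer $[m\ell]$ has a simple zero at the root of unity, so every hyperplane enters with coefficient $1$ rather than with the $\ell$-adic valuation $\nu_\ell(m\ell)$ of the modular case, and it is precisely the matching of a long-root hyperplane at level $\ell$ against a short-root hyperplane at level $2\ell$ that makes the two surviving terms cancel, a coincidence special to $C_n$ at $\ell=2$.

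Finally I would read off the extension group. Simplicity of $\Delta_q(\omega_2)$ gives $\Delta_q(\omega_2)=L_q(\omega_2)=\nabla_q(\omega_2)$, so $\Ext^1_{\mathcal C_q}(L_q(\omega_2),L_q(0))=\Ext^1_{\mathcal C_q}(\Delta_q(\omega_2),\nabla_q(0))=0$ by the standard vanishing of $\Ext^1$ between a Weyl and a dual Weyl module. Let $K$ be the radical of $\Delta_q(2\omega_1)$, so that $0\to K\to\Delta_q(2\omega_1)\to L_q(2\omega_1)\to 0$ and $K$ has the two composition factors $L_q(\omega_2)$ and $L_q(0)$, each once. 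The vanishing just noted rules out $K$ being a nonsplit extension of $L_q(\omega_2)$ by $L_q(0)$ with $L_q(0)$ in the socle; hence $L_q(0)$ lies in the head of $K$ and $\Hom_{\mathcal C_q}(K,L_q(0))=\C$. Applying $\Hom_{\mathcal C_q}(-,\nabla_q(0))$ to the short exact sequence and using $L_q(0)=\nabla_q(0)$, $\Hom_{\mathcal C_q}(\Delta_q(2\omega_1),\nabla_q(0))=0$ and $\Ext^1_{\mathcal C_q}(\Delta_q(2\omega_1),\nabla_q(0))=0$, I obtain $\Ext^1_{\mathcal C_q}(L_q(2\omega_1),L_q(0))\cong\Hom_{\mathcal C_q}(K,\nabla_q(0))=\C$, as required.
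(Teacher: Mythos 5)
Your strategy coincides with the paper's: everything hinges on the quantum Jantzen sum formula at $\ell=2$, the crux is the simplicity of $\Delta_q(\omega_2)$, and the extension group is then read off from the structure of $\Delta_q(2\omega_1)$. Your surrounding steps are correct: the identification $L_q(2\omega_1)=V^{[q]}$ via the tensor product theorem, the weight-multiplicity count giving the three composition factors each with multiplicity one (granting simplicity of $\Delta_q(\omega_2)$), and the final long-exact-sequence argument showing $\Ext^1_{\mathcal C_q}(L_q(2\omega_1),\C)\cong\Hom_{\mathcal C_q}(K,\C)=\C$ are all sound. The paper reaches the same conclusions slightly more directly, by computing $\Delta_q(2\omega_1)^1=L_q(\omega_2)\oplus L_q(0)$ from the sum formula and observing that these two factors cannot extend each other because each is simultaneously a Weyl and a dual Weyl module.

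The genuine gap is in your verification that $\sum_{i>0}\cha\,\Delta_q(\omega_2)^i=0$. Your claim that the only reflected weights off the walls are $s_{2\epsilon_1,\ell}\cdot\omega_2$ (giving $-\chi(0)$) and $s_{\epsilon_1+\epsilon_2,2\ell}\cdot\omega_2$ (giving $+\chi(0)$) holds only for $n=2$. Already for $n=3$, where $\omega_2+\rho=(4,3,1)$, both of these weights are singular: $s_{2\epsilon_1,2}\cdot\omega_2=(-3,1,0)$ has $\rho$-shift $(0,3,1)$, orthogonal to $(2\epsilon_1)^\vee$, and $s_{\epsilon_1+\epsilon_2,4}\cdot\omega_2=(-2,-2,0)$ has $\rho$-shift $(1,0,1)$, orthogonal to $(\epsilon_1-\epsilon_3)^\vee$; so both contribute $0$. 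The terms that actually survive for $n=3$ are $s_{\epsilon_1+\epsilon_2,6}\cdot\omega_2=0$, giving $+\chi(0)$, and $s_{\epsilon_1+\epsilon_3,2}\cdot\omega_2=(-2,1,-3)$, giving $-\chi(0)$; and for $n=4$ there are four surviving terms (two each of $+\chi(0)$ and $-\chi(0)$). So the cancellation mechanism you describe --- a long-root hyperplane at level $\ell$ against a short-root hyperplane at level $2\ell$ --- is a rank-two accident, and your computation does not establish the simplicity of $\Delta_q(\omega_2)$ for general $n$. Since both your multiplicity count and your vanishing $\Ext^1_{\mathcal C_q}(L_q(\omega_2),L_q(0))=0$ rest on that simplicity, the proof is incomplete for $n\geq 3$. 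The statement itself is true (it is exactly what the paper extracts from its form of the $\ell=2$ sum formula), but to repair your argument you must, for arbitrary $n$, identify all reflected weights $s_{\alpha,m\ell}\cdot\omega_2$ whose $\rho$-shift is regular and show that their signed contributions cancel --- a bookkeeping problem whose size grows with the rank --- or replace this step by a rank-independent argument.
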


\begin{proof}
Let $\lambda \in X^+$. Then the quantum sum formula, \cite{APW}, (in the case where $\ell = 2$) says that the ``Jantzen filtration'' for the Weyl module $\Delta_q(\lambda)$ in $\mathcal C_q$
$$0 = \Delta_q(\lambda)^{r+1} \subset   \Delta_q(\lambda)^{r} \subset \cdots  \Delta_q(\lambda)^{1} \subset  \Delta_q(\lambda)$$
satisfies
$$ \sum_{j \geq 1}\cha(\Delta_q(\lambda)^j) = \sum_{\beta} \sum_m (-1)^{m-1} \chi(\lambda - m \beta),$$
where the first  sum is taken over those $\beta \in R^+$ for which $\langle \lambda + \rho, \beta^\vee \rangle$ is odd and the second sum over those $m$ for which $0 < 2m < \langle \lambda + \rho, \beta^\vee \rangle$.
Recall also that we always have $\Delta_q(\lambda)/\Delta_q(\lambda)^1 \simeq L_q(\lambda)$. 

These results imply easily
\begin{equation} \Delta_q(\omega_2) = L_q(\omega_2)
\end{equation} \label{a}
(because $\Delta_q(\omega_2)^1 = 0$), and
\begin{equation} \label{b}\Delta_q(2 \omega_1)^1 = L_q(\omega_2) \oplus L_q(0).\end{equation} 
Here the two simple modules on the right hand side cannot extend eachother because both are Weyl modules as well as dual Weyl modules (cf. (\ref{a})). 

Note that (\ref{b})  implies that 
\begin{equation} \label{c} \Ext^1_{\mathcal C_q}(V^{[q]}, \C) = \C = \Ext^1_{\mathcal C_q}(V^{[q]}, L_q(\omega_2)).\end{equation}
Here the first equality is identical to the last claim in the lemma.
\end{proof}

\begin{prop} The natural module $V$ for $G_0$ is contained in $H^1(u_q, \C)^{[-q]}$, i.e the trivial module for $u_q$ has at least $\dim V = 2n$ different self-extensions. 
\end{prop}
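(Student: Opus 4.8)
The goal is to prove that the natural $Sp_{2n}$-module $V$ embeds into $H^1(u_q, \C)^{[-q]}$, which by the isomorphism (\ref{q-1}) with $j=1$, $L_0 = \C$, $M = \C$, $L_1 = V^*$ (so that $L = L_1^{[q]} = (V^*)^{[q]}$, using $V \simeq V^*$) translates into exhibiting a non-zero map $V \hookrightarrow \Ext^1_{u_q}(\C, \C)^{[-q]}$ as $G_0$-modules. Since (\ref{q-1}) reads $\Ext^1_{\mathcal C_q}(V^{[q]}, \C) \simeq \Hom_{\mathcal C_0}(V, \Ext^1_{u_q}(\C, \C)^{[-q]})$, and Lemma 3.4 already gives $\Ext^1_{\mathcal C_q}(V^{[q]}, \C) = \C \neq 0$, the plan is simply to run this identification and read off that $V$ appears in $H^1(u_q,\C)^{[-q]}$.

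Concretely, I would proceed as follows. First I would invoke (\ref{q-1}) in the special case $L_0 = M = \C$ and $L_1 = V^*$, noting that $L = \C \otimes (V^*)^{[q]} = (V^*)^{[q]}$; since $V \simeq V^*$ as $G_0$-modules this is $V^{[q]}$ up to the isomorphism already recorded. This yields the isomorphism
\begin{equation} \label{final-identify}
\Ext^1_{\mathcal C_q}(V^{[q]}, \C) \simeq \Hom_{\mathcal C_0}(V, \Ext^1_{u_q}(\C, \C)^{[-q]}),
\end{equation}
where I have used $\Ext^1_{u_q}(\C,\C) = H^1(u_q,\C)$. Next I would quote Lemma 3.4, whose final assertion gives $\Ext^1_{\mathcal C_q}(V^{[q]}, \C) = \C$. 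Combining with (\ref{final-identify}) gives $\Hom_{\mathcal C_0}(V, H^1(u_q,\C)^{[-q]}) = \C \neq 0$.

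Finally I would translate the non-vanishing Hom-space into the desired containment. Because $\mathcal C_0$ is semisimple and $V = L_0(\epsilon_1)$ is simple, any non-zero element of $\Hom_{\mathcal C_0}(V, H^1(u_q,\C)^{[-q]})$ is automatically injective, so $V$ is a direct summand — in particular a submodule — of $H^1(u_q, \C)^{[-q]}$. This is exactly the first assertion of the proposition. The statement that the trivial $u_q$-module has at least $2n$ distinct self-extensions then follows because $\dim_{\C} H^1(u_q, \C) \geq \dim_{\C} V = 2n$, each non-zero cohomology class corresponding to a non-split extension of $\C$ by $\C$.

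I do not expect a serious obstacle here: the entire content has been front-loaded into the computation $\Ext^1_{\mathcal C_q}(V^{[q]}, \C) = \C$ of Lemma 3.4, and the present proposition is essentially a bookkeeping application of the fixed-point isomorphism (\ref{q-1}). The one point deserving care is the precise matching of twists: one must check that the Frobenius untwist $^{[-q]}$ on the right of (\ref{q-1}) is applied to the $u_q$-cohomology and that the resulting $U_0 = G_0$-module structure is the genuine $G_0$-action on $V$, so that ``$V$ is a summand of $H^1(u_q,\C)^{[-q]}$'' is a statement in $\mathcal C_0$ and not merely at the level of characters. Since $\mathcal C_0$ is semisimple this identification of the $G_0$-structure is unambiguous once the Hom-space is shown to be non-zero.
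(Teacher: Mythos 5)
Your proof is correct and follows essentially the same route as the paper: both apply the fixed-point isomorphism (2.8) with $L_0 = M = \C$ to identify $\Ext^1_{\mathcal C_q}(V^{[q]}, \C)$ with $\Hom_{\mathcal C_0}(V, H^1(u_q,\C)^{[-q]})$, quote the computation $\Ext^1_{\mathcal C_q}(V^{[q]}, \C) = \C$ (the final assertion of Lemma 3.3, which you mislabel as Lemma 3.4), and use simplicity of $V$ in the semisimple category $\mathcal C_0$ to conclude $V \subset H^1(u_q,\C)^{[-q]}$. Your detour through $L_1 = V^*$ and the self-duality $V \simeq V^*$ is harmless but unnecessary, since one can take $L_1 = V$ directly, as the paper does.
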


\begin{proof}
By (\ref{q-1}) with $L_0 = M = \C$ and $L_1 = V$ we have $\Ext^1_{\mathcal C_q}(V^{[q]}, \C) \simeq \Hom_{\mathcal C_0}(V, H^1(u_q, \C)^{[-q]})$. By (\ref{c}) the left hand side equals $\C$, and since $V$ is simple in $\mathcal C_0$ this means that $V \subset H^1(u_q, \C)^{[-q]}$.
\end{proof}

\begin{rem}
\begin{enumerate}
\item Using (\ref{q-4}) and (\ref{q-5}) we get that 
$$ H^1(u_q, \C)^{[-q]} \simeq H^0_{0}(H^1(b_q, \C)^{[-q]}).$$
(in fact this equality holds in all cases but when we are not in the very special case Lemma 3.2(2) reveals that it just reads $0 = 0$). 
Via  Proposition 3.4 this gives 
\begin{equation} \label{d} \C = \Hom_{\mathcal C_0}(V, H^1(u_q, \C)^{[-q]}) = \Hom_{B_0}(V, H^1(b_q, \C)^{[-q]})
\end{equation}
When we then combine  this with Lemma 3.2(3) we see that $V^+ $ is a $B_0$-submodule of $H^1(b_q, \C)^{[-q]}$, i.e there are at least $n$ different $b_q$ self-extensions of the trivial module.
\item
Note that $L_q(\omega_1) = \Delta_q(\omega_1)$. Comparing with the above calculation of $V \otimes V$ we see that the tensor product $L_q(\omega_1) \otimes L_q(\omega_1)$ has a  $\Delta_q$-filtration with factors $\Delta_q(2 \omega_1), \Delta_q(\omega_2)$ and $\Delta_q(0) = \C$. 

Set $Q = L_q(\omega_1) \otimes L_q(\omega_1)/\C$. The above results on the Jantzen filtations of $\Delta_q(2\omega_1)$ and $\Delta_q(\omega_2)$ implies that the $U_q$-socle of $Q$ equals $L_q(\omega_2)$. It follows that $H^0(u_q, Q) = 0$. Hence the short exact sequence $0 \to \C \to L_q(\omega_1) \otimes L_q(\omega_1) \to Q \to 0$ implies that $H_1(u_q, \C)$ is contained in $\Ext^1_{u_q}(L_q(\omega_1), L_q(\omega_1))$. Hence Proposition 3.4 implies that the simple $u_q$-module $L_q(\omega_1)$ also has non-trivial self-extensions.
\end{enumerate}
\end{rem}

\begin{rem}
\item In the modular case the analogue of Proposition 3.4 also holds. Note however, that Lemma 3.3 does not carry over verbatim to the modular case (In type $C$ the modular Weyl module $\Delta_q(\omega_2)$ is not simple and the Weyl module $\Delta_2(2 \omega_1)$ has $4$ instead of $3$ composition factors. This comes from the more complicated sum formulas in the modular case, see \cite{An83}. 
Nevertheless one may check that in the modular case we still have $V^{(1)} \subset H^1(G_1, k)$ and $(V^+)^{(1)} \subset H^1(B_1, k)$. In fact, equality holds (in both cases), see the calculations in \cite{AJ}, Section 6.19.

\end{rem}

\section{Extensions between pairs of simple modules in $\mathcal C_q$ and $\mathcal C_p$}

In this section we shall show how to obtain all extensions between any two simple modules in $\mathcal C_q$, respectively $\mathcal C_p$, from the extensions of the finitely many pairs of simple modules for $u_q$, respectively $G_1$. Alternatively, we can turn things around and obtain all extensions for simple $u_q-$, respectively $G_1-$, modules from a (finite) set of simple modules in $\mathcal C_q$, respectively $\mathcal C_p$, see Remark 4.3(2) below.

We shall need the following notation. Let $\alpha_0$ denote the highest short root in $R$. Moreover, we denote the Coxeter number of $R$ by $h$. We have $h = \langle \rho, \alpha_0^\vee \rangle -1$.  Then we set 
$$ A_0 = \{\nu \in X^+ \mid \langle \nu, \alpha_0^\vee \rangle < 2(h-1)\}.$$
Note that $A_0$ is a finite subset of $X^+$.

\subsection{The quantum case}

We begin with a lemma which gives a bound on the weights of all the ext-groups $\Ext^1_{u_q}(L_q(\lambda), L_q(\mu))$ with $\lambda, \mu \in X_1$ .

\begin{lem} Let $\lambda, \mu \in X_1$. Then any composition factor $L_0(\nu)$ of $\Ext^1_{u_q}(L_q(\lambda), L_q(\mu))^{[-q]}$ have $\nu \in A_0$. 
\end{lem}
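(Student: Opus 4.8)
The plan is to bound the highest weights $\nu$ appearing in $\Ext^1_{u_q}(L_q(\lambda), L_q(\mu))^{[-q]}$ by relating this $u_q$-extension group to quantum group cohomology via the machinery of Section 2.3. First I would use \eqref{q-1} in the form $\Ext^1_{\mathcal C_q}(L_q(\mu)\otimes L_0(\nu)^{[q]}, L_q(\lambda)) \simeq \Hom_{\mathcal C_0}(L_0(\nu), \Ext^1_{u_q}(L_q(\mu), L_q(\lambda))^{[-q]})$, so that $L_0(\nu)$ is a composition factor of $\Ext^1_{u_q}(L_q(\lambda), L_q(\mu))^{[-q]}$ precisely when there is a nonzero extension in $\mathcal C_q$ between the simple modules $L_q(\lambda)$ and $L_q(\mu)\otimes L_0(\nu)^{[q]}$ (after possibly swapping the roles of $\lambda,\mu$ using the duality $\Ext^1(A,B)\simeq \Ext^1(B^*,A^*)$). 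The point is to translate a statement about $u_q$-cohomology into a statement about genuine extensions in the quantum category, where the linkage principle and the theory of Weyl modules give weight constraints.

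Next I would extract the weight bound from the two simple modules that extend each other in $\mathcal C_q$. A nonzero $\Ext^1_{\mathcal C_q}$ between two simple modules forces their highest weights to be linked under the dot-action of the affine Weyl group (linkage principle, \cite{An03}), and more usefully forces one highest weight to be a composition-factor weight of the Weyl module (or dual Weyl module) of the other. Writing $\lambda, \mu \in X_1$ means $\langle \lambda, \alpha^\vee\rangle, \langle \mu, \alpha^\vee\rangle \le \ell - 1$ for all simple $\alpha$, so both lie in the bottom alcove closure; combined with $\mu + \ell\nu$ being forced to be comparable to $\lambda$ in the dominance order (because a nonzero extension requires $\mu+\ell\nu$ to occur as a weight below $\lambda$ in the relevant Weyl module, or conversely), this pins $\ell\nu$ to a difference of two weights each lying in the restricted region. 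The key inequality I expect to produce is $\langle \mu + \ell\nu + \rho, \alpha_0^\vee\rangle \le \langle \lambda + \rho, \alpha_0^\vee\rangle$ together with the $X_1$-bounds, which after dividing by $\ell$ and using $\langle \rho, \alpha_0^\vee\rangle = h+1$ should yield $\langle \nu, \alpha_0^\vee\rangle < 2(h-1)$, i.e. $\nu \in A_0$.

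The main obstacle will be turning the linkage/dominance comparison into the sharp numerical bound $\langle \nu, \alpha_0^\vee \rangle < 2(h-1)$ rather than a weaker estimate. The delicate point is that $\nu$ itself need not be dominant a priori in the intermediate estimates, and one must be careful that the extension places $\mu + \ell\nu$ in the correct alcove relative to $\lambda$: the strongest control comes from the fact that a self-consistent nonzero $\Ext^1$ requires both highest weights to sit inside a single linkage class whose spread in the $\alpha_0^\vee$-direction is governed by the Coxeter number. I would make this precise by testing against the highest short root $\alpha_0$, using $\langle \lambda, \alpha_0^\vee \rangle \le (\ell-1)\langle \rho', \alpha_0^\vee\rangle$-type bounds for $\lambda \in X_1$ (expanding $\alpha_0^\vee$ in fundamental coweights) to control the restricted part, and then showing the twisted part $\ell\nu$ cannot push the weight outside the range $\langle \cdot, \alpha_0^\vee\rangle < 2(h-1)\ell$. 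Care with the distinction between $\alpha_0$ (highest short root) and the highest root, and the resulting constants in $h = \langle \rho, \alpha_0^\vee\rangle - 1$, is where the argument must be executed precisely; the rest is a bookkeeping of dominance inequalities.
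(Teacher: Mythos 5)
Your first step is fine: using \eqref{q-1}, $L_0(\nu)$ occurs in $\Ext^1_{u_q}(L_q(\mu), L_q(\lambda))^{[-q]}$ exactly when $\Ext^1_{\mathcal C_q}(L_q(\mu+\ell\nu), L_q(\lambda)) \neq 0$ (and the swap of $\lambda,\mu$ is harmless since the lemma quantifies over all pairs). The gap is in the second step. Comparability of highest weights does hold, but it only ``pins $\ell\nu$ to a difference of two restricted weights'' in the case $\mu + \ell\nu < \lambda$; in the other case, $\lambda < \mu+\ell\nu$ and $[\Delta_q(\mu+\ell\nu):L_q(\lambda)]\neq 0$, which gives a \emph{lower} bound on $\ell\nu$ and no upper bound whatsoever. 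Your ``key inequality'' $\langle \mu+\ell\nu+\rho,\alpha_0^\vee\rangle \leq \langle\lambda+\rho,\alpha_0^\vee\rangle$ is simply the assertion that you are in the first case, and the second case is not a degenerate one that can be excluded --- it is where the entire content of the lemma sits. Concretely, for quantum $\mathfrak{sl}_2$ one has $\Ext^1_{u_q}(L_q(\ell-2), \C)^{[-q]} \simeq L_0(1)$, corresponding via \eqref{q-1} to $\Ext^1_{\mathcal C_q}(L_q(2\ell-2), \C) \neq 0$: here the twisted weight $2\ell-2$ is \emph{larger} than $\lambda = 0$, your inequality reads $2\ell-1 \leq 1$, and moreover $\langle \nu,\alpha_0^\vee\rangle = 1 = h-1$, which already exceeds the bound $\langle\nu,\alpha_0^\vee\rangle < h-1$ that the first-case computation produces. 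So the sharp constant $2(h-1)$ in $A_0$ cannot come out of your inequality; it is forced by the second case. The fallback you invoke --- that a nonzero $\Ext^1$ places both weights in ``a single linkage class whose spread in the $\alpha_0^\vee$-direction is governed by the Coxeter number'' --- is false: when $\ell \geq h$ the orbit $\hat W\cdot\lambda$ meets every alcove, so $\hat W\cdot\lambda \cap X^+$ is infinite and linkage gives no bound at all. (A small additional point: $\nu$ is automatically dominant, being the label of a simple $\mathcal C_0$-module, so that worry is vacuous.)

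The missing ingredient, which is how the paper closes exactly this hole, is to produce an ambient module with \emph{a priori bounded weights} in which the relevant $\Ext$-group can be computed, rather than trying to bound the weights from the pair of simples alone. Since $St_q$ is injective in $\mathcal C_q$, it is injective as a $u_q$-module, hence so is $St_q \otimes L_q((\ell-1)\rho + w_0\mu)$, and one has an embedding $L_q(\mu) \subset St_q \otimes L_q((\ell-1)\rho + w_0\mu)$ with cokernel $Q$. Injectivity makes $\Ext^1_{u_q}(L_q(\lambda), L_q(\mu))$ a quotient of $\Hom_{u_q}(L_q(\lambda), Q)$; if $L_0(\nu)$ occurs, then $L_q(\lambda)\otimes L_0(\nu)^{[q]}$ is a $U_q$-submodule of $Q$, so $\lambda + \ell\nu$ is a weight of $Q$ and hence $\lambda+\ell\nu \leq 2(\ell-1)\rho + w_0\mu$. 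Pairing with $\alpha_0^\vee$ gives $\ell\langle\nu,\alpha_0^\vee\rangle \leq 2(\ell-1)(h-1)$, i.e. $\nu \in A_0$ --- an upper bound valid irrespective of which of the two highest weights is larger. Without this device (or some equivalent substitute, such as an independent bound on how far above $\lambda$ a simple module can lie and still have nonvanishing $\Ext^1_{\mathcal C_q}$ against $L_q(\lambda)$ --- but that statement \emph{is} the lemma, so assuming it would be circular), your argument cannot be completed.
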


\begin{proof}
When $\mu \in X_1$ we have $L_q((\ell - 1)\rho + w_0\mu)^* = L_q((\ell -1) \rho - \mu)$. It follows that $L_q(\mu) \otimes L_q((\ell - 1)\rho + w_0\mu)^*$ has a unique $U_q$-homomorphism onto $St_q$ and that we have a corresponding injection $L_q(\mu) \subset St_q \otimes L_q((\ell -1)\rho + w_0 \mu)$. Let $Q$ denote the cokernel of this map. Then the short exact sequence
$$ 0 \to L_q(\mu) \to St_q \otimes L_q((\ell -1)\rho + w_0 \mu) \to Q \to 0$$
combined with the fact that $St_q$ in injective in $\mathcal C_q$ (and hence is also injective as a $u_q$-module) show that $\Ext_{u_q}^1(L_q(\lambda), L_q(\mu))$ is a quotient of $\Hom_{u_q}(L_q(\lambda), Q)$. 

Suppose $L_0(\nu)$ is a summand of $\Ext_{u_q}^1(L_q(\lambda), L_q(\mu))^{[-q]}$. It then follows that $L_0(\nu)$ is also a summand of $\Hom_{u_q}^1(L_q(\lambda), Q)^{[-1]}$. 
However, $L_q(\lambda) \otimes \Hom_{u_q}^1(L_q(\lambda), Q)$ is a $U_q$-submodule of $Q$. We conclude that $\lambda + \ell \nu$ is a weight of $Q$. Hence $\lambda + \ell \nu < 2(\ell -1)\rho + \lambda$. 
This gives the inequality $\langle \lambda + \ell \nu, \alpha_0^\vee \rangle \leq \langle 2(\ell -1)\rho + \lambda, \alpha^\vee \rangle$ and hence ensures that $\nu \in A_0$. 
\end{proof}

Let now $\lambda, \mu \in X^+$ and write as usual $\lambda = \lambda^0 + \ell \lambda^1$, $\mu = \mu^0 + \ell \mu^1$ with $\lambda^0, \mu^0 \in X_1$ and $\lambda^1, \mu^1 \in X^+$. Then we have

\begin{thm} 
\begin{enumerate}
\item Assume that we are not in {\it the very special case}. If $\lambda^0 = \mu^0$ then $\Ext_{\mathcal C_q}^1(L_q(\lambda), L_q(\mu)) = 0.$ 
\item If $\lambda^0 \neq \mu^0$ we set $E^1(\lambda^0, \mu^0) = \Ext^1_{u_q}(L_q(\lambda^0), L_q(\mu^0))^{[-q]}$. Then \\
 $\dim_{\C} \Ext_{\mathcal C_q}^1(L_q(\lambda), L_q(\mu))  =  \sum_{\nu \in A_0} [E^1(\lambda^0, \mu^0):L_0(\nu)] [L_0(\nu) \otimes L_0(\mu^1): L_0(\lambda^1)].$
\end{enumerate}
\end{thm}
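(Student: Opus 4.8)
The plan is to reduce everything to the semisimple category $\mathcal C_0$ by means of the isomorphism (\ref{q-1}) and then to feed in the structural facts already established. First I would invoke the quantum analogue of Steinberg's tensor product theorem to factor the two simple modules as
$$L_q(\lambda) = L_q(\lambda^0) \otimes L_0(\lambda^1)^{[q]}, \qquad L_q(\mu) = L_q(\mu^0) \otimes L_0(\mu^1)^{[q]}.$$
This exhibits $L_q(\lambda)$ in exactly the shape $L = L_0 \otimes L_1^{[q]}$ required by (\ref{q-1}), with $L_0 = L_q(\lambda^0)$ and $L_1 = L_0(\lambda^1)$. Taking $M = L_q(\mu)$ and $j=1$ in (\ref{q-1}) gives
$$\Ext^1_{\mathcal C_q}(L_q(\lambda), L_q(\mu)) \simeq \Hom_{\mathcal C_0}\bigl(L_0(\lambda^1), \Ext^1_{u_q}(L_q(\lambda^0), L_q(\mu))^{[-q]}\bigr).$$

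The next step is to strip the Frobenius-twisted factor $L_0(\mu^1)^{[q]}$ out of the inner $u_q$-extension group. Since $u_q$ acts trivially on $L_0(\mu^1)^{[q]}$, as a $u_q$-module $L_q(\mu^0) \otimes L_0(\mu^1)^{[q]}$ is just a direct sum of $\dim L_0(\mu^1)$ copies of $L_q(\mu^0)$; the tensor identity (the projection formula for the normal subalgebra $u_q \subset U_q$ applied to a module inflated from $U_0$) promotes this to an isomorphism of $U_q$-modules
$$\Ext^1_{u_q}(L_q(\lambda^0), L_q(\mu^0) \otimes L_0(\mu^1)^{[q]}) \simeq \Ext^1_{u_q}(L_q(\lambda^0), L_q(\mu^0)) \otimes L_0(\mu^1)^{[q]}.$$
Applying $^{[-q]}$ and recalling the definition of $E^1(\lambda^0, \mu^0)$ yields, as objects of $\mathcal C_0$,
$$\Ext^1_{u_q}(L_q(\lambda^0), L_q(\mu))^{[-q]} \simeq E^1(\lambda^0, \mu^0) \otimes L_0(\mu^1).$$
Substituting this back and using that $\mathcal C_0$ is semisimple (so a $\Hom$ merely records a composition multiplicity) gives
$$\dim_{\C}\Ext^1_{\mathcal C_q}(L_q(\lambda), L_q(\mu)) = [E^1(\lambda^0, \mu^0) \otimes L_0(\mu^1) : L_0(\lambda^1)].$$

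To finish part (2) I would decompose $E^1(\lambda^0, \mu^0)$ into its simple constituents in $\mathcal C_0$. By Lemma 4.1 every constituent is of the form $L_0(\nu)$ with $\nu \in A_0$, so $E^1(\lambda^0, \mu^0) \simeq \bigoplus_{\nu \in A_0} L_0(\nu)^{\oplus [E^1(\lambda^0,\mu^0):L_0(\nu)]}$; tensoring with $L_0(\mu^1)$ and reading off the multiplicity of $L_0(\lambda^1)$ produces exactly the stated sum. Part (1) is then immediate: when $\lambda^0 = \mu^0$ and we are not in \emph{the very special case}, Theorem 3.1(1) gives $\Ext^1_{u_q}(L_q(\lambda^0), L_q(\lambda^0)) = 0$, hence $E^1(\lambda^0,\mu^0) = 0$ and the extension group vanishes. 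I expect the main point requiring care to be the tensor-identity step: one must verify that pulling out $L_0(\mu^1)^{[q]}$ respects the residual $U_q$-action on the $u_q$-Ext group (equivalently, after untwisting, the $U_0$-action on an object of $\mathcal C_0$), since it is precisely this action whose weights Lemma 4.1 constrains. Everything else is a formal assembly of (\ref{q-1}), the Steinberg factorization, semisimplicity of $\mathcal C_0$, and the two cited structural results.
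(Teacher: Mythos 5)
Your proposal is correct and follows essentially the same route as the paper: the paper's proof applies the isomorphism (\ref{q-1}) (with the Steinberg factorization and the pulling-out of $L_0(\mu^1)$ already folded into the displayed formula), then cites Theorem 3.1(1) for part (1) and Lemma 4.1 for part (2). Your write-up merely makes explicit the steps the paper leaves implicit, notably the tensor-identity argument for extracting $L_0(\mu^1)^{[q]}$ compatibly with the residual $U_0$-action and the use of semisimplicity of $\mathcal C_0$ to convert the $\Hom$ into a multiplicity.
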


\begin{proof}
According to (2.8) we have in general 
$$\Ext^1_{\mathcal C_q}(L_q(\lambda), L_q(\mu)) = \Hom_{\mathcal C_0}(L_0(\lambda^1), \Ext^1_{u_q}(L_q(\lambda^0), L_q(\mu^0)^{[-q]} \otimes L_0(\mu^1)).$$ 
Then (1) follows from Theorem 3.1(1) and (2) from Lemma 4.1.

\end{proof}

\begin{rem}\begin{enumerate}
\item Note that  the right hand side of the equation in Theorem 4.1(2) is in fact ``computable''  once we know the characters of all $\Ext^1$-groups between simple modules of $u_q$, i.e. once we have found the finitely many numbers  $[\Ext^1_{u_q}(L_q(\lambda^0), L_q(\mu^0))^{[-q]}:L_0(\nu)]$ ($\lambda^0$ and $ \mu^0$ run through $X_1$ and $\nu \in A_0$). Of course to get the extensions between all simple modules in $\mathcal C_q$ we also have to know the infinitely many multiplicities $ [L_0(\nu) \otimes L_0(\mu^1): L_0(\lambda^1)]$. Here only $\nu$ is limited to a finite set, whereas $\lambda^1$ and $\mu^1$ run through the infinite set $X^+$. However, these multiplicities are well known classical quantities (they are e.g. Littlewood-Richardson coefficients). 

\item Conversely, Theorem 4.1(2) also tells us how to compute extensions of simple $u_q$-modules in terms of extensions of simple modules in $\mathcal C_q$. In fact, let $\lambda^0, \mu^0 \in X_1$ be given and set $\mu^1 = 0 $. Then the formula in Theorem 4.1 (2) says that 
\begin{equation} \label{ext} [\Ext_{u_q}^1(L_q(\lambda^0), L_q(\mu^0))^{[-q]} : L_0(\lambda^1)] = \dim_{\C} \Ext^1_{\mathcal C_q}(L_q(\lambda), L_q(\mu^0))
\end{equation}
for all $\lambda^1 \in X^+$. 

Note that by Lemma 4.1 the left hand side in (\ref{ext}) is zero unless $\lambda^1 \in A_0$. So this formula determines all extensions between simple $u_q$-modules in terms of the extensions of the finitely many pairs of simple modules $(L_q(\lambda), L_q(\mu))$ with $\lambda^0, \mu \in X_1$ and $\lambda^1 \in A_0$.
\end{enumerate}
\end{rem}

\subsection{The modular case}
The analogue of Theorem 4.2 in $\mathcal C_p$ is

\begin{thm} 
\begin{enumerate}
\item Assume that either $R$ is not of type $C$ or that $p >2$. If $\lambda^0 = \mu^0$ then $\Ext_{\mathcal C_p}^1(L_p(\lambda), L_p(\mu)) = \Ext_{\mathcal C_p}^1(L_p(\lambda^1), L_p(\mu^1)) $
\item If $\lambda^0 \neq \mu^0$ then \\
 $\Ext_{\mathcal C_p}^1(L_p(\lambda), L_p(\mu))  =  \Hom_{\mathcal C_p}(L_p(\lambda^1), \Ext^1_{G_1}(L_p(\lambda^0), L_p(\mu^0))^{(-1)} \otimes L_p(\mu^1)).$
 
\end{enumerate}
\end{thm}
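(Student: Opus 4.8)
The plan is to transcribe the proof of Theorem 4.2 into the modular setting, the only structural change being that the clean isomorphism (\ref{q-1}), which rested on the semisimplicity of $\mathcal C_0$, must be replaced by the genuine five-term exact sequence (\ref{m-2}). First I would apply Steinberg's tensor product theorem to write $L_p(\lambda) = L_p(\lambda^0) \otimes L_p(\lambda^1)^{(1)}$ and $L_p(\mu) = L_p(\mu^0) \otimes L_p(\mu^1)^{(1)}$, and feed these into (\ref{m-2}) with $L_0 = L_p(\lambda^0)$, $L_1 = L_p(\lambda^1)$ and $M' = L_p(\mu)$. Since $L^* = L_0 \otimes L_1^{(1)} = L_p(\lambda)$, the middle term of (\ref{m-2}) is exactly $\Ext^1_{\mathcal C_p}(L_p(\lambda), L_p(\mu))$ (using that $\Ext^1$ computed in $\mathcal C_p$ agrees with rational $\Ext^1_G$ in $\tilde{\mathcal C_p}$).

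Next I would evaluate the two $G_1$-level functors occurring in (\ref{m-2}). Because $G_1$ acts trivially on Frobenius twists, the twisted factor $L_p(\mu^1)^{(1)}$ factors out, giving $\Hom_{G_1}(L_0, M')^{(-1)} = \Hom_{G_1}(L_p(\lambda^0), L_p(\mu^0))^{(-1)} \otimes L_p(\mu^1)$ and $\Ext^1_{G_1}(L_0, M')^{(-1)} = \Ext^1_{G_1}(L_p(\lambda^0), L_p(\mu^0))^{(-1)} \otimes L_p(\mu^1)$. By Curtis' theorem (\cite{Cu}) both $L_p(\lambda^0)$ and $L_p(\mu^0)$ are simple $G_1$-modules, so $\Hom_{G_1}(L_p(\lambda^0), L_p(\mu^0)) = \delta_{\lambda^0, \mu^0}\, k$.

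The argument then divides along the two cases of the statement. If $\lambda^0 = \mu^0$, the first term of (\ref{m-2}) becomes $\Ext^1_G(L_p(\lambda^1), L_p(\mu^1))$, while the third term becomes $\Hom_G(L_p(\lambda^1), \Ext^1_{G_1}(L_p(\lambda^0), L_p(\lambda^0))^{(-1)} \otimes L_p(\mu^1))$; outside \emph{the very special case} this vanishes by Theorem 3.1(2), so the five-term sequence collapses to the isomorphism of part (1). If $\lambda^0 \neq \mu^0$, the factor $\delta_{\lambda^0, \mu^0}$ annihilates both the first and the fourth terms, so the middle map is an isomorphism onto the third term, which is precisely the $\Hom_G$-expression asserted in part (2).

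The main obstacle is conceptual rather than computational: in contrast with the quantum case, the outer cohomology $H^\bullet(G, -)$ is not concentrated in degree $0$, so the spectral sequence does not degenerate for free and $\Ext^1_{\mathcal C_p}$ does not reduce to a mere $\Hom$. Everything hinges on killing the third term in the case $\lambda^0 = \mu^0$, and this is exactly the point at which the hypothesis that $R$ is not of type $C$ or $p > 2$ (i.e. that we are not in \emph{the very special case}) enters, through the no-self-extension result Theorem 3.1(2). The remaining verifications — the Frobenius-twist bookkeeping and the identification of $\Ext^1_{\mathcal C_p}$ with rational $\Ext^1_G$ — are routine.
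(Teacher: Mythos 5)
Your proof is correct and is essentially the paper's own argument: the paper proves this theorem by simply invoking ``analogous arguments to those in the proof of Theorem 4.2, cf.\ Section 5 in \cite{A82}'', and what you spell out — replacing the degenerate isomorphism (\ref{q-1}) by the five-term exact sequence (\ref{m-2}), then applying Steinberg's tensor product theorem, Curtis' theorem, and Theorem 3.1(2) — is precisely that analogous argument. In particular, you correctly locate the one place where the hypothesis excluding \emph{the very special case} is needed, namely the vanishing of the third term of (\ref{m-2}) when $\lambda^0=\mu^0$, while in the case $\lambda^0\neq\mu^0$ the collapse comes for free from $\Hom_{G_1}(L_p(\lambda^0),L_p(\mu^0))=0$.
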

 
 This theorem is proved using analogous arguments to those in the proof of Theorem 4.2, cf. Section 5 in \cite{A82}. 
 
 \begin{rem} \begin{enumerate}
 \item
We have to iterate this theorem to find all extensions of simple modules in $\mathcal C_p$. In fact, suppose $\lambda^0 = \mu^0$ (i.e. we are in case (1)). Then we write $\lambda^1 = \lambda^{10}+ p \lambda^{11} $ and similarly for $\mu^1$. If also $\lambda^{10} = \mu^{10}$ then we use (1) once more. We repeat this until $\lambda^{1\cdots 1} \neq \mu^{1\cdots 1}$ (if this never happens we have $\lambda = \mu$ and in this case we know $\Ext^1_{\mathcal C_p}(L_p(\lambda), L_p(\mu)) = 0$). Then we apply (2). 
\item
Like in the quantum case we have also in the modular case that if $\lambda, \mu \in X_1$ then all dominant weights of $\Ext^1_{G_1}(L_p(\lambda), L_p(\mu))^{(-1)}$ belong to $A_0$. The proof is similar to the proof of Lemma 4.1 except that $St_p$ is not injective in $\mathcal C_p$. However, $St_p$ is an injective $G_1$-module and this suffices to carry over the proof.
\item 
Suppose $p \geq 3(h-1)$. Then $A_0$ is contained in the bottom alcove $A^+$ of $X^+$ (note that $A^+= \{\lambda \in X^+ \mid \langle \lambda + \rho, \alpha_0^\vee \rangle < p\}$). In that case all $\Ext^1_{G_1}(L_p(\lambda^0), L_p(\mu^0))^{(-1)}$ are direct sums of simple Weyl modules with highest weights in $A^+$.  This shows that we can proceed just as in Remark 4.3 to get the all extensions between simple modules in $\mathcal C_p$ from the corresponding extensions for $G_1$ and vice versa. See also the results in section 5 of \cite{A82}.
\end{enumerate}
\end{rem}

 \section{Extensions of simple modules in $\mathcal C_q$ and Kazhdan-Lusztig polynomials}
 In this section we shall prove that the extensions of simple modules in $\mathcal C_q$ are given by certain specific coefficients of the Kazhdan-Lusztig polynomials, in short KL polynomials, for the affine Weyl group associated with our root system $R$. In the modular case 
 the corresponding result (for finitely many such extensions) was obtained in Section 2 of \cite{A86}. Note that this result depends on the modular Lusztig conjecture (cf. \cite{Lu80}), which is now known to hold only if $p \gg 0$, see \cite{AJS} and\cite{W}.
 
 In the quantum case the corresponding Lusztig conjecture (announced 8 years after the modular conjecture, see \cite{Lu89}) is known to hold except possibly for some very small values of $\ell$, see \cite{Lu90}, \cite{KT1} and \cite{KT2}. Nevertheless, we shall assume $\ell$ to be at least $h$. This ensures the existence of $\ell$-regular weights in $X$ and and we shall proceed just as \cite{A86}. 
 
 \subsection{KL-polynomials and their $\mu$-coefficients}
 Let $\hat W$ be the affine Weyl group associated with $R$. This group is generated by the reflections $(s_\alpha)_{\alpha \in S}$ together with the affine reflection $s_{\alpha_0, 1}$ given by  $s_{\alpha_0,1} \cdot \lambda = s_{\alpha_0}\cdot \lambda + \ell \alpha_0, \lambda \in X$ (in the modular case we replace $\ell$ by $p$). 
  
 For any two pairs $y,w \in \hat W$ we denote by $P_{y,w} \in \Z[t]$ the KL polynomial associated to $y,w$, see \cite{KL}.  Then $P_{y,w} = 0$ unless $y \leq w$ (in the Bruhat order on $\hat W$) and we have $P_{w,w} = 1$ for all $w$. The degree of $P_{y,w}$ is at most $(\ell(w) -\ell(y)-1)/2$ for all $y < w$.
 
\begin{defn} Let $y,w \in \hat W$ satisfy $y < w$. Then we set  $\mu(y,w)$ equal to the coefficient of $t^{(\ell(w) - \ell(y) - 1)/2)}$ in $P_{y,w}$.
\end{defn}

Observe that $\mu(y,w)$ is zero when the lengths of $y$ and $w$ are both even or both odd. It will turn out that $\mu(y,w)$ may well be zero also in many cases where the difference $\ell (w) - \ell(y)$ is odd. 

Recall that KL-polynomials are given by an explicit algorithm, see \cite{KL}.

\subsection{The relation between extensions of simple modules and the $\mu$-coefficients}
  Here we assume $\ell \geq h$.
 
 Recall that $A^+ \subset X$ denotes the bottom alcove in $X^+$.

 Our assumption on $\ell$ ensures that $A^+ \neq \emptyset$. If $B$ is another alcove in $X$ then $B = w\cdot A^+$ for a unique $w \in \hat W$. 

Set $A^- = w_0\cdot A^+$ and fix a weight $\lambda \in A^-$. Then we have
 
 \begin{thm}. Let $y, w \in \hat W$ with $y < w$ and $y\cdot \lambda, w \cdot \lambda \in X^+$  Choose a lower wall of the alcove containing $w \cdot \lambda$ and choose a reflection $s$ in a wall of this alcove with $sw\cdot \lambda < w\cdot \lambda$. If also $sy\cdot \lambda < y\cdot \lambda$ then we have
 $$ \dim_{\C} \Ext_{\mathcal C_q}^1(L_q(sw\cdot \lambda),  L_q(y\cdot \lambda)) = \mu (sw, y).$$
 \end{thm}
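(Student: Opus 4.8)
The plan is to compute $\Ext^1_{\mathcal C_q}(L_q(sw\cdot\lambda), L_q(y\cdot\lambda))$ by means of the wall-crossing functor attached to $s$ and then to read the answer off Lusztig's character formula, which is available to us because $\ell\geq h$. Write $L_x=L_q(x\cdot\lambda)$ for $x\in\hat W$ with $x\cdot\lambda\in X^+$. Let $\mu_0$ be the singular weight lying on the chosen lower wall of the alcove of $w\cdot\lambda$, so that $s\cdot\mu_0=\mu_0$ and hence $sx\cdot\mu_0=x\cdot\mu_0$ for all $x$. Let $T_s$ and $T^s$ be the (exact, biadjoint) translation functors onto and off this wall, and let $\theta_s=T^sT_s$ be the associated self-dual wall-crossing endofunctor of the relevant block of $\mathcal C_q$.

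The hypotheses $sw\cdot\lambda<w\cdot\lambda$ and $sy\cdot\lambda<y\cdot\lambda$ place $y\cdot\lambda$ on the upper side of the $s$-wall and $sw\cdot\lambda$ on the lower side; with the standard conventions this makes $\theta_s L_{sw}=0$, while $\theta_s L_y$ is a self-dual module of Loewy length three whose head and socle are both $L_y$. Starting from the short exact sequence $0\to L_y\to\theta_s L_y\to Y\to 0$ (so that $L_y$ is the socle and $Y=\theta_s L_y/L_y$), applying $\Hom_{\mathcal C_q}(L_{sw},-)$, and using that $L_{sw}\not\cong L_y$ forces $\Hom_{\mathcal C_q}(L_{sw},L_y)=\Hom_{\mathcal C_q}(L_{sw},\theta_s L_y)=0$ while the biadjunction gives $\Ext^1_{\mathcal C_q}(L_{sw},\theta_s L_y)\cong\Ext^1_{\mathcal C_q}(\theta_s L_{sw},L_y)=0$, the long exact sequence collapses to an isomorphism $\Ext^1_{\mathcal C_q}(L_{sw},L_y)\cong\Hom_{\mathcal C_q}(L_{sw},Y)$. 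Hence the extension group has dimension equal to the multiplicity of $L_{sw}$ in the socle of $Y$, that is, in the heart of $\theta_s L_y$.

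It remains to evaluate this heart multiplicity. Here I would pass to the Grothendieck group and use that $\theta_s$ sends $[\Delta_q(x\cdot\lambda)]$ to $[\Delta_q(x\cdot\lambda)]+[\Delta_q(sx\cdot\lambda)]$, which is exactly the action of the Kazhdan-Lusztig generator $\underline{H}_s$. Substituting Lusztig's expansion of $[L_y]$ in the classes of Weyl modules and collecting the coefficient of $[L_{sw}]$ produces precisely the top coefficient $\mu(sw,y)$ of the relevant Kazhdan-Lusztig polynomial. Provided the heart of $\theta_s L_y$ is semisimple, this composition multiplicity coincides with the socle multiplicity found in the previous step, which yields $\dim_{\C}\Ext^1_{\mathcal C_q}(L_q(sw\cdot\lambda),L_q(y\cdot\lambda))=\mu(sw,y)$.

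The main obstacle is twofold. First, one must establish that $\theta_s L_y$ really has the rigid, Loewy-length-three structure with semisimple heart predicted by the Kazhdan-Lusztig picture: this is what forces the answer to be the top ($\mu$-)coefficient rather than some lower coefficient of the polynomial, and it is the quantum analogue of the rigidity of wall-crossings in category $\mathcal O$. Second, since $\mathcal C_q$ contains only dominant weights it is a truncation of the full affine picture, so the affine Kazhdan-Lusztig data transfer cleanly only when every weight occurring in the argument ($y\cdot\lambda$, $sw\cdot\lambda$, $w\cdot\lambda$ and $\mu_0$) stays inside the dominant chamber; this is exactly what the hypotheses on the lower wall and the inequalities $sw\cdot\lambda<w\cdot\lambda$ and $sy\cdot\lambda<y\cdot\lambda$ are designed to guarantee. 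Carrying out this boundary bookkeeping, and reconciling the Bruhat order forced by $\lambda\in A^-$ with the weight order so that the surviving coefficient is exactly $\mu(sw,y)$, is the technical heart of the argument and proceeds as in \cite{A86}.
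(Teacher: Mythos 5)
Your strategy (apply the wall-crossing functor to the simple module $L_q(y\cdot\lambda)$ and read the answer off a socle multiplicity) is genuinely different from the paper's, and it has a gap that the references you invoke do not close. The paper's proof is a two-step citation of \cite{A86}: Proposition 2.8 there replaces the simple module by a dual Weyl module, giving $\dim_{\C}\Ext^1_{\mathcal C_q}(L_q(sw\cdot\lambda),L_q(y\cdot\lambda))=\dim_{\C}\Ext^1_{\mathcal C_q}(L_q(sw\cdot\lambda),H^0_q(y\cdot\lambda))$, and Proposition 2.12 evaluates the latter dimension as $\mu(sw,y)$ from Lusztig's character formula. The point of routing through $H^0_q(y\cdot\lambda)$ is that $\Ext$-groups into dual Weyl modules are accessible from characters (Euler characteristics plus vanishing statements), so no information about the submodule structure of wall-crossed simples is ever needed. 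Your route, by contrast, must identify the socle multiplicity $\dim_{\C}\Hom_{\mathcal C_q}(L_q(sw\cdot\lambda),\theta_sL_q(y\cdot\lambda)/L_q(y\cdot\lambda))$ with the composition multiplicity $[\theta_sL_q(y\cdot\lambda):L_q(sw\cdot\lambda)]$, and for that you need exactly what you flag as an "obstacle": rigidity of $\theta_sL_q(y\cdot\lambda)$ with semisimple heart. This is the analogue of Vogan's conjecture for category $\mathcal O$; it is \emph{not} a formal consequence of knowing the simple characters, it is nowhere proved in \cite{A86} (so "proceeds as in \cite{A86}" does not repair it), and establishing it in the quantum setting requires machinery (Koszulity/parity arguments) far beyond what this paper assumes. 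Without it your argument yields only the inequality $\dim_{\C}\Ext^1_{\mathcal C_q}(L_q(sw\cdot\lambda),L_q(y\cdot\lambda))\leq\mu(sw,y)$.

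There is also a concrete error in your setup. From $s\cdot\mu_0=\mu_0$ you conclude $sx\cdot\mu_0=x\cdot\mu_0$ for all $x$; but $sx\cdot\mu_0=s\cdot(x\cdot\mu_0)$, which equals $x\cdot\mu_0$ only when $x\cdot\mu_0$ lies on the reflecting hyperplane of $s$. What is true is $xs\cdot\mu_0=x\cdot\mu_0$: translation functors implement \emph{right} multiplication, i.e. each alcove gets reflected in its own wall, not in the fixed hyperplane $H$ of $s$. With the standard conventions (a simple module is killed by translation to a wall precisely when that wall is a lower wall of its alcove), your dichotomy is backwards: since $H$ is a lower wall of the alcove of $w\cdot\lambda$, hence an upper wall of the alcove of $sw\cdot\lambda$, the functor attached to this wall kills $L_q(w\cdot\lambda)$ and does not kill $L_q(sw\cdot\lambda)$; and whether it kills $L_q(y\cdot\lambda)$ is governed by a wall of $y$'s own alcove, which differs from $H$ unless $y\in\{w,sw\}$, so the hypothesis $sy\cdot\lambda<y\cdot\lambda$ cannot be fed into the functor as you do. The same left/right confusion makes the Grothendieck-group formula $[\theta_s\Delta_q(x\cdot\lambda)]=[\Delta_q(x\cdot\lambda)]+[\Delta_q(sx\cdot\lambda)]$ incorrect as written. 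This part is fixable by redoing the combinatorics with reflections in the walls of each alcove (equivalently, right multiplication by the corresponding reflection of the base alcove), but the semisimplicity gap above remains the essential missing idea.
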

 
 \begin{proof} We extract this result from Section 2 in \cite{A86}. Note that we have chosen notation and assumptions in accordance with that reference.
 
 By \cite{A86}, Proposition 2.8 we have 
 $$\dim_{\C} \Ext^1_{\mathcal C_q}(L_q(sw\cdot \lambda),L_q(y.\lambda))= \dim_{\C} \Ext^1_{\mathcal C_q}(L_q(sw\cdot \lambda), H^0_q(y.\lambda))$$. Comparing this with \cite{A86}, Proposition 2.12 we get the equality in the theorem. 
 \end{proof}
 
 \begin{rem} 
 \begin {enumerate}
 \item This theorem determines the $\Ext$-groups between simple modules with highest weights in the orbit of $\lambda \in A^-$. However, the translation principle (see Section II.7 in\cite{RAG})
  then ensures that the answer is the same for any other weight $\nu \in A^-$.
 \item As we proved in Section 4 we only have to determine finitely many such extensions of simple modules, e.g. those corresponding to weights in the region $\{\nu \in X^+ \mid \langle \nu + \rho, \alpha_0^\vee \rangle \leq 3 \ell(h-1 )\}$. Then the remaining ones follow via the decompositions of tensor products of simple modules in $\mathcal C_0$.
 \end{enumerate}
 \end{rem}

\vskip 1 cm
\end{document}